\pgfplotsset{compat=1.18}
\tikzset{
    cross/.pic = {
    \draw[rotate = 30] (-#1,0) -- (#1,0);
    \draw[rotate = 60] (0,-#1) -- (0, #1);
}
}
\newtheorem{thm}{Theorem}
\newtheorem{lem}[thm]{Lemma}
\newtheorem{question}{Question}
\newtheorem*{thm*}{Theorem}
\newcommand{\R}{\mathbb R}
\newcommand{\T}{\mathbb T}
\newcommand{\Z}{\mathbb Z}
\newcommand{\N}{\mathbb N}
\newcommand{\cA}{\mathcal A}
\newcommand{\Hom}{\mathcal H}
\newcommand{\interior}{\text{Int}}
\DeclareMathOperator{\diam}{diam}
\newcommand{\cP}{\mathcal P}
\title{Realization of Anosov Diffeomorphisms on the Torus}
\author{Tamara Kucherenko}\address{Department of Mathematics,
The City College of New York, New York, NY, 10031, USA}\email{tkucherenko@ccny.cuny.edu}
\author{Anthony Quas}\address{ Department of Mathematics and Statistics, University of Victoria, Victoria, BC
Canada}\email{aquas@uvic.ca}
\thanks{T.K. is supported by grants from the Simons Foundation \#430032. }
\thanks{A.Q. is supported by a grant from NSERC}
\begin{document}
\begin{abstract}
We study area preserving Anosov maps on the two-dimensional torus within a fixed 
homotopy class.  We show that the set of pressure functions for Anosov diffeomorphisms 
with respect to the geometric potential is equal to the set of pressure functions 
for the linear Anosov automorphism with respect to H\"{o}lder potentials. We use this 
result to provide a negative answer to the $C^{1+\alpha}$ version of the question 
posed by Rodriguez Hertz on whether two homotopic area preserving $C^\infty$ 
Anosov difeomorphisms whose geometric potentials have identical pressure 
functions must be $C^\infty$ conjugate.


\end{abstract}

\keywords{Anosov diffeomorphisms, smooth conjugacy problem, thermodynamic formalism, pressure function, equilibrium states, H\"{o}lder potentials}
\subjclass[2020]{37D35, 37B10, 37A60, 37C15, 37D20}
\maketitle
\section{Introduction}
We consider an Anosov diffeomorphism $T$ of the two-dimensional torus $\T^2$. 
That is, there is a continuous splitting of the tangent bundle of $\T^2$ into 
a direct sum $E^u\oplus E^s$ which is preserved by the derivative $DT$ and such that the unstable 
subbundle $E^u$ is uniformly expanded by $DT$ and the stable subbundle $E^s$ is uniformly 
contracted by $DT$. Any such Anosov diffeomorphism $T$ is homotopic and 
topologically conjugate to a hyperbolic toral automorphism $L$ given by an 
integer matrix with determinant one and no eigenvalues of absolute value one. 
This was first proven by Franks in 1969 \cite{Franks} under the assumption that all 
points on the torus are non-wandering (in fact, his result was for an $n$-dimensional torus). 
A year later Newhouse \cite{Newhouse} pointed out that this assumption is satisfied 
when either $\dim E^s=1$ or $\dim E^u=1$, which provided the classification of Anosov 
diffeomorphisms up to topological conjugacy in dimensions 2 and 3. 
The case of dimension $n\ge 4$ was settled by Manning \cite{Manning} in 1974.

Suppose $T_1$ and $T_2$ are two $C^r \,(r>1)$ Anosov diffeomorphisms in the homotopy 
class of a fixed hyperbolic automorphism $L$. It follows from the above that there 
is a homeomorphism $h$ such that $h\circ T_1=T_2\circ h$. The problem of determining 
when $h$ has the same regularity as the maps $T_1$ and $T_2$ is known as the smooth 
conjugacy problem and has been studied extensively, see e.g. \cite{LMM-0,KaSa,Go, GoRH}. 
Already in 1967 Anosov \cite{Anosov} constructed examples which showed that $h$ 
may be merely H\"older even for highly regular $T_1$ and $T_2$, which initially discouraged 
further study of the problem (see comments in \cite{Sm}). However, a series of papers 
\cite{LMM-I,LMM-II,LMM-III,LMM-IV}, authored, in various combinations, by de la 
Llave, Marco, and Moriy\'on, appeared in the 1980s focusing on the study of the 
conjugacy of $C^\infty$ diffeomorphisms on $\T^2$.
The culmination of their work is the following theorem.
\begin{thm*}{\cite{LMM-IV}} Let $T_1$ and $T_2$ be $C^\infty$ Anosov diffeomorphisms 
of $\T^2$. If they are topologically conjugate and the Lyapunov exponents at 
corresponding periodic orbits are the same, then the conjugating homeomorphism is $C^\infty$.
\end{thm*}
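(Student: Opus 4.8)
The plan is to promote the topological conjugacy $h$, normalized so that $h\circ T_1=T_2\circ h$, to a $C^\infty$ diffeomorphism by first establishing that it is $C^\infty$ along the stable and along the unstable foliation of $T_1$ and then gluing these two leafwise regularities together. As a first step I would observe, using the dynamical description $W^s_{T_1}(x)=\{y:\ d(T_1^n x,T_1^n y)\to 0\}$ and the conjugacy relation, that $h$ carries the stable (resp.\ unstable) foliation of $T_1$ onto that of $T_2$; on a single unstable leaf $h$ is then an orientation-preserving homeomorphism of the line conjugating the $C^\infty$ expanding map $T_1|_{W^u}$ to $T_2|_{W^u}$.

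The core of the argument is to show that $h$ is uniformly $C^\infty$ along unstable leaves. Put $\phi_i=\log\|DT_i|_{E^u_i}\|$, which is a \emph{scalar} H\"older function on $\T^2$ precisely because $\dim E^u_i=1$ --- this is where dimension two is used. For a periodic point $p$ of $T_1$ of period $n$ the unstable Lyapunov exponent is $\frac1n\sum_{k=0}^{n-1}\phi_1(T_1^k p)$, and the hypothesis (equality of exponents at periodic orbits corresponding under $h$) says this equals $\frac1n\sum_{k=0}^{n-1}\phi_2(T_2^k h(p))$. Thus $\phi_1$ and $\phi_2\circ h$ have the same Birkhoff sum around every periodic orbit of $T_1$, and topological transitivity together with the Liv\v{s}ic theorem yields a H\"older $u$ with $\phi_1-\phi_2\circ h=u\circ T_1-u$. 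Differentiating $h\circ T_1=T_2\circ h$ along unstable leaves shows that $\log D^u h+u$ is $T_1$-invariant, hence constant, so $e^{-u}$ (up to a positive constant) is the only candidate for $D^u h$; I would then prove that $h$ is genuinely $C^1$ along unstable leaves, with $D^u h=c\,e^{-u}$, by comparing $h$ on a short leaf segment with the smooth linearizations of $T_1$ and $T_2$ at the segment's endpoints and controlling the comparison via the cohomology equation.

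Next comes the bootstrap to $C^\infty$: the unstable leaves are $C^\infty$ curves on which $T_i$ acts smoothly, so $\phi_1$ is $C^\infty$ along them, and once $h$ is $C^1$ along leaves the composition $\phi_2\circ h$ is $C^1$ along unstable leaves of $T_1$; the version of the Liv\v{s}ic theorem that tracks the regularity of the transfer function then forces $u$, hence $D^u h=c\,e^{-u}$, to gain a derivative, and iterating shows that $h$ is $C^\infty$ along the unstable foliation with uniform bounds. Applying the same reasoning to $T_1^{-1}$ and $T_2^{-1}$ gives that $h$ is $C^\infty$ along the stable foliation too. Finally, $h$ is $C^\infty$ along the two transverse continuous foliations $W^s_{T_1}$, $W^u_{T_1}$, whose individual leaves are uniformly $C^\infty$, and a Journé-type regularity lemma upgrades this to $h\in C^\infty(\T^2)$; since $D^u h$ and $D^s h$ are nonzero and span at every point, $Dh$ is everywhere invertible and $h$ is a $C^\infty$ diffeomorphism.

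I expect the middle steps to be the main obstacle. Extracting an honest leafwise derivative of $h$ from the candidate $e^{-u}$ produced by Liv\v{s}ic requires a delicate distortion estimate, and reaching $C^\infty$ --- rather than stopping at $C^{1+\alpha}$ --- depends on the smooth-data version of Liv\v{s}ic and on all the leafwise estimates being uniform enough in the transverse variable for the gluing lemma to apply. By comparison, the identification of the invariant foliations and the final gluing are comparatively soft once the right leafwise statements are in hand.
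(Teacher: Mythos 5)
This statement appears in the paper only as a quoted theorem of de la Llave and Moriy\'on \cite{LMM-IV}; the paper does not supply a proof, so there is no internal argument to compare yours against. With that caveat, your outline does reproduce the architecture of the actual de la Llave--Marco--Moriy\'on argument: (i) the periodic-data hypothesis together with Liv\v{s}ic's theorem produces a H\"older transfer function $u$ making the unstable log-Jacobians of $T_1$ and $T_2\circ h$ cohomologous; (ii) $e^{-u}$, suitably normalized, is the only consistent candidate for the derivative of $h$ along unstable leaves; (iii) a smooth-data Liv\v{s}ic theorem bootstraps the leafwise regularity; (iv) the same applied to $T_i^{-1}$ handles the stable foliation; and (v) a Journ\'e-type regularity lemma for functions smooth along two transverse foliations glues the pieces. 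The observation that $\dim\T^2=2$ makes both Jacobian cocycles scalar, so that Liv\v{s}ic applies directly, is the correct point at which low dimension enters.

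The gap is in the step you yourself flag as the main obstacle, and it is more serious than a ``delicate distortion estimate.'' Passing from ``$e^{-u}$ is the only candidate for $D^u h$'' to ``$h$ restricted to a single unstable leaf is absolutely continuous with that density'' is the analytic heart of the theorem, and comparing $h$ with smooth linearizations of $T_1$, $T_2$ at endpoints of a short segment does not, as stated, deliver it; a priori $h$ along a leaf could be singular even while conjugating smooth expanding maps. The argument that works goes through SRB/Gibbs measures: equality of periodic Jacobian data forces equality of the Gibbs weights defining the conditional measures of the SRB measures on unstable leaves, $h$ pushes the $T_1$-conditionals to the $T_2$-conditionals, and this yields absolute continuity of $h$ along leaves with Radon--Nikodym derivative given by the Liv\v{s}ic cocycle ratio. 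A secondary gap is uniformity: for the Journ\'e-type lemma you need the leafwise $C^k$ bounds to be uniform in the transverse variable, which is not automatic and in the original papers is supplied by the regularity theory for the Liv\v{s}ic cohomology equation (the reference cited as \cite{LMM-0} in this paper), a nontrivial input your sketch invokes only implicitly.
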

Later it was shown that the equality of the corresponding Lyapunov exponents for 
$C^r$ Anosov diffeomorphisms on $\T^2$ implies that the conjugacy is $C^{r-\epsilon}$, 
however it is no longer true on $\T^4$ even for $C^\infty$ maps \cite{LMM-V}. 
The case of $\T^3$ is still open, with a positive result recently obtained 
when one of the diffeomorphisms is an automorphism \cite{DeWGigolev}.

Note that if $h$ is differentiable, then for any point $x$ of period $n$ for 
$T_1$, $h(x)$ is of period $n$ for $T_2$ and 
$$
DT_1^n(x)=Dh^{-1}(h(x))DT_2^n(h(x))Dh(x).
$$ 
We see that the Lyapunov exponents of $x$ under $T_1$ and $h(x)$ under $T_2$ coincide. 
The result of \cite{LMM-IV} is quite remarkable since a condition, which is a priori 
weaker than $h$ being $C^1$, is shown to imply that $h$ is $C^\infty$.
F. Rodriguez Hertz asked whether we can get away with even less. He proposed to 
replace the assumption of equality of the Lyapunov exponents by the equality of the 
pressure functions of the geometric potentials.

To introduce the pressure function we first define the topological pressure using the 
variational principle. The topological pressure of a continuous potential $\phi:\T^2\to\R$ 
with respect to a dynamical system $T:\T^2\to\T^2$ is given by
$$P_{\rm top}(T,\phi)=\sup_\mu\left\{h_\mu(T)+\int\phi\,d\mu  \right\},$$
where $\mu$ runs over the set of all $T$-invariant probability measures on $\T^2$ and 
$h_{\mu}(T)$ is the measure-theoretic entropy of $\mu$. A measure $\mu$ which 
realizes the supremum is called an equilibrium state of $\phi$. By a celebrated 
result of Bowen \cite{Bowen}, for an Anosov diffeomorphism $T$ any H\"{o}lder 
potential $\phi:\T^2\to\R$ has a unique equlibrium state $\mu_\phi$. Equilibrium 
states are mathematical generalizations of Gibbs distributions in statistical physics. 
The most important ones are the measure of maximal entropy, which is the equilibrium 
state of a constant potential, and the SRB measure, which is the equilibrium state of the 
\emph{geometric potential}. The geometric potential is the negative logarithm of the 
Jacobian of $T$ along the unstable bundle $E^u$,
$$
\phi_{T}^{u}(x)=-\log\big|D_u T(x)\big|.
$$

The \emph{pressure function} of a potential $\phi$ is the map $t\mapsto P_{\rm top}(T,t\phi)$, 
where $t$ is a real valued parameter. Information about various dynamical properties 
of an Anosov system is encoded into the pressure function of the geometric potential. 
For example, when $T$ is area preserving, the positive Lyapunov exponent of $T$ with 
respect to the normalized Lebesgue measure (which is the equilibrium state of $\phi_{T}^{u}$) 
is given by the negative derivative of the pressure function of $\phi_{T}^{u}$ at $t=1$, 
while the derivative at $t=0$ gives the Lyapunov exponent with respect to the measure of 
maximal entropy of $T$.
F. Rodriguez Hertz asked whether information on the regularity of the conjugating 
homeomorphism can also be extracted from the pressure functions of the geometric 
potentials of the corresponding maps. More precisely,
\begin{question}{\cite[attr. F. Rodriguez Hertz]{E2}}\label{q:Federico}
Let $T_1$ and $T_2$ be $C^\infty$ area-preserving Anosov diffeomorphisms on $\T^2$ that are homotopic. Assume $P_{\rm top}(T_1,t\phi^u_{T_1})=P_{\rm top}(T_2,t\phi^u_{T_2})$ for all $t$. Does this imply that $T_1$ and $T_2$ are $C^\infty$ conjugate?
\end{question}

We point out that the answer to the above question is positive when one of the 
diffeomorphisms is an automorphism. Indeed if $T_1$ is an automorphism, then
$\phi^u_{T_1}$ is constant, so that $P_{\rm top}(T_1,t\phi^u_{T_1})$, and hence 
$P_{\rm top}(T_2,t\phi^u_{T_2})$ is affine. However, pressure functions of H\"older
continuous functions are known to be strictly convex unless the underlying potential is
cohomologous to a constant. Hence $\phi^u_{T_2}$ is cohomologous to the constant $\phi^u_{T_1}$.
This guarantees that the Lyapunov exponents of periodic points of $T_2$ match those of
periodic orbits of $T_1$, so that $T_1$ and $T_2$ are $C^\infty$ conjugate by the above result.


One reason that Anosov diffeomorphisms on $\T^2$ are well-understood is that they admit 
symbolic codings. Using a Markov partition of $\T^2$ one can find a finite set $\cA$ 
(indexing the set of rectangles of the Markov partition) and a mixing subshift of finite type 
$\Omega\subset \cA^\Z$ such that there exists a finite-to-one factor map 
$\pi:\Omega\to \T^2$ which is H\"older. Then $\phi^u_T\circ\pi$ is a H\"older 
potential on $\Omega$. 
It turns out that in the symbolic setting, a related question to Question \ref{q:Federico} 
has been studied by Pollicott and Weiss in \cite{PoW}. 

Suppose $(\Omega,\sigma)$ is a subshift of 
finite type and $\psi:\Omega\to\R$ is a H\"older potential. Denote the Birkhoff sum of 
$\psi$ by $S_n\psi(x)=\sum_{k=0}^{n-1}\psi(\sigma^k x)$. The multi-set 
$\{(S_n\psi(x),n): \sigma^n x=x\}$ is called the \emph{unmarked orbit spectrum of $\psi$}. 
In \cite{PoW} the extent to which a potential is determined by its periodic orbit 
invariants such as its orbit spectrum and its pressure function was investigated. 
Note that for subshifts of finite type the pressure function can be defined 
topologically as
$$P_{\rm top}(\sigma,t\psi)=\lim_{n\to\infty}\frac{1}{n}\log\left(\sum_{\sigma^nx=x}e^{tS_n\psi(x)}\right),$$
and therefore any two potentials with the same unmarked orbit spectrum must have 
identical pressure functions. The converse is not true. It was shown by Pollicott 
and Weiss that there exists an uncountable family of H\"older
continuous functions on a full shift with different unmarked orbit spectra, 
but all sharing the same pressure function.

Since for Anosov $T:\T^2\to\T^2$ we have $-\log\big|D_uT^n\big|=S_n\phi^u_T$, 
the equality of the Lyapunov exponents at periodic orbits for torus diffeomorphisms 
$T_1$ and $T_2$ corresponds to the equality of the unmarked orbit spectra of their 
geometric potentials. Hence Question \ref{q:Federico} may be seen as
asking whether H\"older functions arising from geometric potentials of Anosov 
diffeomorphisms on the torus are special enough that the equality of their pressure 
functions implies the equality of their unmarked orbit spectrum. That turns out not to be the case.

 We show that the set of pressure functions for Anosov diffeomorphisms with respect 
 to their geometric potentials is equal to the set of pressure functions for the hyperbolic 
 automorphism with respect to H\"{o}lder potentials.  

\begin{thm}\label{thm:main}
    Let $L$ be a hyperbolic automorphism of $\T^2$ and let $\mu$ be the equilibrium state for a
    H\"older continuous potential $\phi$ with $P_{\rm top}(L,\phi)=0$.
    Then there exists a $C^{1+H}$ area-preserving Anosov diffeomorphism $T$ of $\T^2$ such that  
    \begin{itemize}
\item   the system $T\colon (\T^2,\mathsf{Leb})\to (\T^2,\mathsf{Leb})$ is
    conjugate to $L\colon (\T^2,\mu)\to(\T^2,\mu)$ by a map $h$;
    \item     the potential  $-\log|D_uT|\circ h$ is cohomologous to $\phi$.
    \end{itemize}
\end{thm}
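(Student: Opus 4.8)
The plan is to construct $T$ directly from the thermodynamic data of $\phi$ and to read off the conjugacy $h$ from a symbolic coding. Fix a Markov partition for $L$, with associated mixing subshift of finite type $(\Omega,\sigma)$ and H\"older factor map $\pi_L\colon\Omega\to\T^2$, and pull $\phi$ back to $\hat\phi=\phi\circ\pi_L$; its equilibrium state $\hat\mu$ projects to $\mu$ and $P_{\rm top}(\sigma,\hat\phi)=0$. Since $\mu$ is a Gibbs state of a H\"older potential on a mixing system it is fully supported and has positive entropy, so $\sup_\nu\int\hat\phi\,d\nu<0$; because $\sup_x\tfrac1n S_n\hat\phi(x)$ converges to this value, for $n$ large the cohomologous potential $\psi:=\tfrac1n S_n\hat\phi$ satisfies $\psi\le-\delta$ for some $\delta>0$. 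Since the theorem asks only that $-\log|D_uT|\circ h$ be \emph{cohomologous} to $\phi$, I will work with this representative $\psi$ of the cohomology class.

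The core is a realization step: from the combinatorics of $\Omega$ and the potential $\psi$, construct a $C^{1+H}$ area-preserving Anosov diffeomorphism $T$ of $\T^2$ in the homotopy class of $L$, carrying a Markov partition coded by the same SFT via a H\"older factor $\pi_T\colon\Omega\to\T^2$, and whose unstable Jacobian satisfies $-\log|D_uT|\circ\pi_T$ cohomologous over $\sigma$ to $\psi$ (equivalently to $\hat\phi$). Roughly, on the rectangles of this partition one prescribes $|D_uT|\approx e^{-\psi}$ along the unstable direction; the stable Jacobian and the angle between the two invariant line fields are then constrained by $\det DT\equiv 1$, and one arranges these so that $|D_sT|<1$ uniformly (using $\psi\le-\delta$), so that $T$ is Anosov, while the H\"older modulus of $\psi$ feeds through bounded-distortion estimates into $C^{1+H}$ regularity of the branch maps. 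Put $h:=\pi_T\circ\pi_L^{-1}$; off a null set this conjugates $L$ to $T$, and since $\pi_L,\pi_T$ are the canonical codings and $\mu,\mathsf{Leb}$ are nonatomic and fully supported, $h$ extends to a homeomorphism of $\T^2$ with $h\circ L=T\circ h$ and $h\circ\pi_L=\pi_T$, which---being homotopic to the identity---is by Franks' theorem \emph{the} topological conjugacy between $L$ and $T$.

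There remain two verifications. First, $h_*\mu=\mathsf{Leb}$: as $T$ is area preserving, $\mathsf{Leb}$ is its SRB measure, i.e.\ the unique equilibrium state of $-\log|D_uT|$, with $P_{\rm top}(T,-\log|D_uT|)=0$; lifting through $\pi_T$, the $\pi_T$-preimage of $\mathsf{Leb}$ is the equilibrium state over $\sigma$ of $-\log|D_uT|\circ\pi_T$, which---being cohomologous to $\hat\phi$---equals $\hat\mu$; pushing forward, $(\pi_T)_*\hat\mu=\mathsf{Leb}$, and since $(\pi_L)_*\hat\mu=\mu$ we obtain $h_*\mu=(\pi_T)_*\big((\pi_L^{-1})_*\mu\big)=(\pi_T)_*\hat\mu=\mathsf{Leb}$, so $h$ conjugates $T\colon(\T^2,\mathsf{Leb})\to(\T^2,\mathsf{Leb})$ to $L\colon(\T^2,\mu)\to(\T^2,\mu)$. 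Second, $-\log|D_uT|\circ h$ is cohomologous to $\phi$: this potential is H\"older on $\T^2$ (as $-\log|D_uT|$ is H\"older, $T$ being $C^{1+H}$, and $h$ is bi-H\"older), its $L$-equilibrium state is $(h^{-1})_*\mathsf{Leb}=\mu$ by the previous step, and $P_{\rm top}(L,-\log|D_uT|\circ h)=P_{\rm top}(T,-\log|D_uT|)=0=P_{\rm top}(L,\phi)$; two H\"older potentials on the transitive system $L$ with the same equilibrium state and the same pressure are cohomologous, so indeed $-\log|D_uT|\circ h$ is cohomologous to $\phi$.

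I expect the main obstacle to be the realization step of the second paragraph---passing from the symbolic, H\"older cocycle $e^{-\psi}$ to an honest $C^{1+H}$ diffeomorphism. The delicate points are: that a H\"older modulus on $\psi$ produces a derivative that is genuinely H\"older (rather than merely continuous or Lipschitz), which is where the distortion theory for nonlinear Markov maps is essential; simultaneously enforcing area preservation and uniform hyperbolicity, which forces a careful matching of the unstable Jacobian, the stable Jacobian, and the (necessarily non-constant) angle between the invariant foliations; and the global assembly---cutting the Markov rectangles to the sizes dictated by $\psi$, tiling them consistently on $\T^2$, and defining $T$ so that it is $C^{1+H}$ across their common boundaries and induces $L$ on $H_1(\T^2)$. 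Granting this construction, area preservation, uniform hyperbolicity, the identification of $h$, and the cohomology relation all follow as sketched above.
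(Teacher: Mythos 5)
Your overall strategy---pass to a symbolic coding, use the equilibrium state of $\phi\circ\pi$ to dictate the geometry of a new diffeomorphism, then read off the conjugacy and the cohomology relation from uniqueness of equilibrium states---matches the paper's framework, and the bookkeeping in your third paragraph (that $h_*\mu=\mathsf{Leb}$ and that $-\log|D_uT|\circ h$ is cohomologous to $\phi$) is correct once the construction is in hand. However, the second paragraph, which you yourself flag as ``the main obstacle,'' is not a proof but a wish list: you assert that one can ``prescribe $|D_uT|\approx e^{-\psi}$ along the unstable direction,'' arrange the stable Jacobian and the angle field so that $\det DT\equiv 1$ and $|D_sT|<1$, and ``tile consistently on $\T^2$,'' but none of these steps is carried out or even outlined in a way that one could check. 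That realization step \emph{is} the theorem; the surrounding verifications are comparatively routine. As written, the argument has a genuine gap.

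For comparison, the paper does not build $T$ as a new map on the standard torus; instead it builds a new $C^{1+H}$ differentiable structure on $\T^2$ with respect to which the \emph{same} automorphism $L$ becomes the desired Anosov diffeomorphism, so the conjugacy $h$ is essentially the change of atlas. The charts are $\alpha_v=\xi\circ\tau_v$ ($v$ homoclinic to $0$), where $\xi=(\xi_1,\xi_2)$ assigns to $x\in A_0$ the $\mu$-measures of the rectangles cut off by the stable and unstable segments through $x$; this makes $\mu$ become Lebesgue in the new coordinates, so area preservation is automatic. The hard work is then: (i) showing the overlap maps $\alpha_{v_1}\circ\alpha_{v_0}^{-1}$ are $C^{1+H}$, using the Gibbs Radon--Nikodym derivative $\theta_v(x)=\exp\sum_n[\phi(L^n(x+v))-\phi(L^nx)]$ and Ruelle's local product structure of the symbolic equilibrium state to identify and control the partial derivatives; (ii) showing $L$ is $C^{1+H}$ in the new atlas, via an analogous $g$-function computation; (iii) a bounded-distortion/Liv\v{s}ic argument to get the cohomology of $-\log|D_uL|$ (new charts) with $\phi$. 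This explicit change-of-coordinates device sidesteps exactly the gluing and angle-matching issues you identified, which is why the paper never has to solve your ``global assembly'' problem. If you want to close the gap along your own route, you would need to supply a construction of comparable concreteness; alternatively, adopt the measure-coordinate idea, which is the content you are missing.
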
 
In this theorem, and throughout the paper, we write $T$ is $C^{1+H}$ to mean 
that there exists $0<\alpha<1$ where $T$ is $C^{1+\alpha}$. 

A statement similar to the above theorem could be deduced from the work by Cawley 
\cite{C} which establishes a bijection between Teichm\"{u}ller space of an Anosov 
diffeomorphism and the quotient of H\"older functions by the subspace of coboundaries 
plus constants. However the proofs in \cite{C} appear to be rather opaque. Our approach 
is constructive where the main step -- the change of coordinates -- is given by an 
explicit formula in terms of the equilibrium state of $\phi$. 


In view of Theorem \ref{thm:main}, to solve Question \ref{q:Federico} we need to find 
H\"{o}lder potentials having identical pressure functions with respect to an 
automorphism $L$, but different unmarked orbit spectra. From the work of Pollicott 
and Weiss one might expect uncountably many such potentials on the corresponding 
subshift of finite type. However, there is no reason to expect that any of these 
potentials will be H\"older continuous on the torus. Hence we have to employ 
another construction to produce torus continuous examples. We obtain
\begin{thm}\label{thm:Federico}
There exist homotopic $C^{1+H}$ area-preserving Anosov diffeomorphisms 
$T_1$ and $T_2$ on $\T^2$ such that $P_{\rm top}(T_1,t\phi^u_{T_1})=P_{\rm top}(T_2,t\phi^u_{T_2})$
for all $t$, but  $T_1$ and $T_2$ fail to be $C^1$ conjugate.
\end{thm}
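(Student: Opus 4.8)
The plan is to derive the statement from Theorem~\ref{thm:main} once we have produced, for a fixed hyperbolic automorphism $L$ of $\T^2$, two H\"older continuous potentials $\phi_1,\phi_2\colon\T^2\to\R$ with $P_{\rm top}(L,\phi_i)=0$ which have a common pressure function,
\begin{equation}\label{eq:plan1}
P_{\rm top}(L,t\phi_1)=P_{\rm top}(L,t\phi_2)\qquad(t\in\R),
\end{equation}
but whose unmarked orbit spectra with respect to $L$ differ, i.e.\ for some $n$ there is a point of $L$-period $n$ whose Birkhoff sum $S_n\phi_1$ disagrees with $S_n\phi_2$ at every point of $L$-period $n$. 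Granting such a pair, apply Theorem~\ref{thm:main} separately to $\phi_1$ and $\phi_2$ to obtain $C^{1+H}$ area-preserving Anosov diffeomorphisms $T_1,T_2$ in the homotopy class of $L$ (hence homotopic to one another), conjugacies $h_i$ with $h_i\circ L=T_i\circ h_i$, and continuous functions $w_i$ with $-\log|D_uT_i|\circ h_i=\phi_i+w_i-w_i\circ L$.

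Two things then need checking. First, the pressure functions of the geometric potentials agree: since topological pressure is invariant under a topological conjugacy and under the addition of a continuous coboundary,
\begin{align*}
P_{\rm top}(T_i,t\phi^u_{T_i}) &= P_{\rm top}\big(L,t(\phi^u_{T_i}\circ h_i)\big)\\
&= P_{\rm top}\big(L,t\phi_i+t(w_i-w_i\circ L)\big)=P_{\rm top}(L,t\phi_i),
\end{align*}
so \eqref{eq:plan1} gives $P_{\rm top}(T_1,t\phi^u_{T_1})=P_{\rm top}(T_2,t\phi^u_{T_2})$ for all $t$. Second, $T_1$ and $T_2$ are not $C^1$ conjugate. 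If $g$ were a $C^1$ diffeomorphism with $g\circ T_1=T_2\circ g$, then, as noted in the introduction, $DT_1^n(x)$ and $DT_2^n(g(x))$ are conjugate whenever $T_1^nx=x$, so $|D_uT_1^n(x)|=|D_uT_2^n(g(x))|$; since $g$ is a period-preserving bijection between the periodic points of $T_1$ and those of $T_2$, this says $\phi^u_{T_1}$ and $\phi^u_{T_2}$ have the same unmarked orbit spectrum (with respect to $T_1$ and $T_2$ respectively). On the other hand, for a point $p$ of $L$-period $n$ the cohomology relation telescopes along the $L$-orbit of $p$ to give $-\log\big|D_uT_i^n(h_i(p))\big|=S_n\phi_i(p)$, and $h_i$ maps the period-$n$ points of $L$ bijectively and period-preservingly onto those of $T_i$; hence the unmarked orbit spectrum of $\phi^u_{T_i}$ under $T_i$ coincides with that of $\phi_i$ under $L$. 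Combining, a $C^1$ conjugacy would force $\phi_1$ and $\phi_2$ to have the same unmarked orbit spectrum, contrary to our choice.

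The heart of the matter is the construction of $\phi_1$ and $\phi_2$, and this is the step I expect to be the main obstacle. Passing through a Markov partition of $(\T^2,L)$ we may model the dynamics by a mixing subshift of finite type $(\Omega,\sigma)$ with a H\"older finite-to-one factor map $\pi\colon\Omega\to\T^2$, and a H\"older potential on $\T^2$ pulls back to a H\"older potential on $\Omega$ with the same pressure function and the same orbit spectrum away from the countable $L$-invariant set of points with non-unique codings. In the symbolic picture one wants the transfer operators of $t\phi_1\circ\pi$ and $t\phi_2\circ\pi$ to share the same leading eigenvalue $e^{P_{\rm top}(L,t\phi_i)}$ for \emph{every} $t$, while their remaining spectral data --- equivalently the weighted periodic counts $\sum_{\sigma^nx=x}e^{tS_n(\phi_i\circ\pi)(x)}$ --- disagree at some period $n$; this is the mechanism behind the examples of Pollicott and Weiss on a full shift. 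The extra constraint we must meet, which their examples need not satisfy, is that $\phi_1\circ\pi$ and $\phi_2\circ\pi$ be pulled back from H\"older functions on $\T^2$. I would therefore build $\phi_1,\phi_2$ directly on $\T^2$ --- for instance from finitely many smooth bumps supported along a carefully chosen finite collection of $L$-periodic orbits placed in general position, so that their forward orbits interact only in a prescribed combinatorial pattern --- and tune the finitely many resulting parameters so that the associated transfer matrices $M_t^{(1)}$ and $M_t^{(2)}$ have the same Perron eigenvalue for all $t$ while the weight on at least one periodic orbit is altered. The difficulty is precisely this balancing act: the perturbation of the periodic-orbit data must be invisible to the leading eigenvalue of the transfer operator \emph{simultaneously for all} $t$ --- so that the entire pressure function, not just finitely many of its Taylor coefficients at $t=0$, is preserved --- and yet be realized by a genuinely H\"older function on $\T^2$; in effect one needs an ``isospectral but not almost-conjugate'' pair of potentials within the restricted class of torus-continuous H\"older potentials, together with a verification of H\"older regularity on $\T^2$. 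Once such a pair is in hand, the reduction above completes the proof.
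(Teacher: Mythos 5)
Your reduction of the theorem to producing two H\"older potentials $\phi_1,\phi_2$ on $\T^2$ with zero pressure, identical pressure functions, and distinct unmarked orbit spectra is correct, and it matches the structure of the paper's argument: pressure is invariant under topological conjugacy and under adding a continuous coboundary, while a $C^1$ conjugacy forces equality of unstable multipliers at corresponding periodic points, which telescopes (via the cohomology relation furnished by Theorem~\ref{thm:main}) to equality of the unmarked orbit spectra of $\phi_1$ and $\phi_2$ under $L$. That part of the argument is sound.

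The gap is that you do not actually construct the pair $(\phi_1,\phi_2)$; you explicitly flag this as ``the main obstacle'' and then sketch an approach without carrying it out. The sketch, moreover, is not convincing as stated: tuning finitely many parameters attached to a finite collection of bumps so that two transfer operators share the same leading eigenvalue \emph{for every real $t$} is an infinite family of constraints, and nothing in the sketch explains why it should admit a solution, let alone one that remains H\"older on $\T^2$. Since Theorem~\ref{thm:Federico} is precisely the assertion that such a pair exists (after the transport through Theorem~\ref{thm:main}), leaving this step unresolved means the proposal does not prove the theorem.

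What the paper does at this point is quite different and much more concrete. It introduces the covering map $M_k(x)=kx\bmod 1$, which commutes with $L$ and restricts to a permutation of the finite lattice $\{0,\tfrac1k,\ldots,\tfrac{k-1}{k}\}^2$, and proves (Lemma~\ref{lem:pressurelift}) that $P_{\rm top}(L,\psi)=P_{\rm top}(L,\psi\circ M_k)$ for \emph{every} continuous $\psi$. Applying this to $\psi=t\phi$ gives the equality of the entire pressure functions of $\phi$ and $\phi\circ M_k$ at one stroke, with no balancing act at all. One then takes a specific $L$ (the Fibonacci automorphism), notes that $(\tfrac12,0),(\tfrac12,\tfrac12),(0,\tfrac12)$ is its unique period-3 orbit, and chooses any H\"older $\phi$ of zero pressure with $\phi(0,0)\ne\tfrac13\bigl(\phi(\tfrac12,0)+\phi(\tfrac12,\tfrac12)+\phi(0,\tfrac12)\bigr)$; setting $\phi_2=\phi\circ M_2$ then makes the period-3 Birkhoff sums of $\phi$ and $\phi_2$ disagree, because every point of that orbit is mapped by $M_2$ to $(0,0)$. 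This is the structural input your sketch is missing: rather than engineering two unrelated potentials to be ``isospectral,'' one produces $\phi_2$ from $\phi$ by a symmetry of the ambient system (a commuting self-cover) that automatically preserves pressure while visibly scrambling periodic-orbit data.
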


In fact our results give countably many homotopic H\"older differentiable area-preserving Anosov diffeomorphisms, none of which are $C^1$ conjugate, 
but all having the same 
pressure function. We do not know whether one can find uncountably many such maps, 
as would be suggested by the result in \cite{PoW}.

We remark that our examples, which are in the $C^{1+H}$ category, do not
directly respond to the $C^\infty$ question of Rodriguez Hertz; however they strongly suggest
a negative answer to that question also. \\

 \textbf{Acknowledgement.} Part of this work was completed during our one-week stay at the 
 \textsl{Centre International de Rencontres Mathématiques} in Luminy, France through the 
 \textsl{Research in Residence} program. We thank CIRM for the support and hospitality.

\section{Preliminary Results}
\subsection{Gibbs Measures and Radon-Nikodym Derivative}\label{sec:R-N_deriv}

In recent works an invariant measure is termed Gibbs if the weight of the Bowen balls 
of order $n$ satisfies the growth estimate given  in \cite[Theorem 1.2]{Bowen}. 
We recall the original definition of a Gibbs state introduced by 
Ruelle \cite{Ruelle} and Capocaccia \cite{Ca}, which is equivalent to  
Bowen's property from \cite{Bowen} in our situation. Let $T:M\to M $ be an 
expansive homeomorphism on a compact metric space $M$. A map $\chi$ 
from some open set $U\subset M$ into $M$ is called \emph{conjugating} for the 
system $(M,T)$ if $d(T^n\circ\chi(x),T^n(x))\to 0$ for $|n|\to\infty$ 
uniformly in $x\in U$. 
In the case of an 
Anosov automorphism $L$, the conjugating homeomorphisms are locally given by
$x\mapsto x+v$ where $v$ is homoclinic to 0. For this article, we only need the global 
conjugating homeomorphisms $x\mapsto x+v$.

Suppose $\phi$ is a continuous function on $M$. A probability measure $\mu$ on $M$ is a \emph{Gibbs state}
for $\phi$ if for every conjugating homeomorphism $\chi:U\to \chi(U)$ where $U=U_\chi$ is
an open set in $M$ the measure $\chi_*( \mu|_{U} )$ is absolutely continuous 
with respect to $\mu|_{\chi(U)}$,
with Radon-Nikodym derivative
\begin{equation}\label{eq:RNderivative}
  \frac{d\chi_*\mu}{d\mu}=\exp \sum_{n\in\Z}\big[\phi\circ T^n\circ \chi^{-1}-\phi\circ T^n\big].
\end{equation}
For an axiom A diffeomorphism the equilibrium state of a H\"{o}lder potential $\phi$ is also a
Gibbs state for $\phi$, which is proven in Ruelle's book \cite[Theorem 7.18]{Ruelle}. 
A result of Haydn \cite{Haydn} is that the converse holds as well. In fact, Haydn and Ruelle 
show in \cite{HR}
that equilibrium states and Gibbs states are equivalent for expansive homeomorphisms with specification
and Bowen potentials.

We need the regularity properties of the Radon-Nikodym derivative (\ref{eq:RNderivative}).
Although the question of regularity seems to be very natural, we were not able to locate a
corresponding result in the literature. We provide a proof in the case of Anosov
automorphisms, however the same
argument can be straightforwardly generalized to
Anosov diffeomorphisms, Axiom A diffeomorphisms or more general Smale spaces. 

\begin{lem}\label{lem:rnderivHolder}
    Let $L:\T^2\to\T^2$ be an Anosov automorphism, let $v$ be homoclinic to 0 and let 
    $\tau(x)=x-v$. Let $\phi$ be a H\"older continuous function and let $\mu$ be the 
    corresponding equilibrium state. 
    Then the Radon-Nikodym derivative $\frac{d\tau_*\mu}{d\mu}$ in \eqref{eq:RNderivative}
    above is H\"older continuous.
\end{lem}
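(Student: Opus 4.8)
The plan is to prove the decay estimate for the individual terms of the series in \eqref{eq:RNderivative} using the fact that $v$ is homoclinic to $0$ in \emph{both} time directions, and then to promote the (bounded) sum to a H\"older function by interpolating between two complementary bounds on each term. Once the exponent $\sum_{n\in\Z}[\phi\circ L^n\circ\tau^{-1}-\phi\circ L^n]$ is shown to be H\"older, the density $\frac{d\tau_*\mu}{d\mu}$ is H\"older because $\exp$ is Lipschitz on any bounded interval.

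First I would unwind the formula. Writing $g_n(x):=\phi(L^n(x+v))-\phi(L^n x)$ for the $n$-th term (here $T=L$ and $\tau^{-1}(x)=x+v$), choosing lifts appropriately shows $d_{\T^2}(L^n(x+v),L^n x)\le \|L^n w\|$ for any lift $w\in\R^2$ of $v$, uniformly in $x$. Since $v$ is homoclinic to $0$ it lies on both the stable and the unstable leaf of $0$, i.e.\ it has a lift $w^s\in E^s$ and a lift $w^u\in E^u$; as $L$ acts on $E^s$ (resp.\ $E^u$) by a scalar of modulus $\lambda^{-1}$ (resp.\ $\lambda$), where $\lambda>1$ is the modulus of the expanding eigenvalue, using $w^s$ for $n\ge0$ and $w^u$ for $n<0$ yields $d_{\T^2}(L^n(x+v),L^n x)\le M\lambda^{-|n|}$ with $M=\max(\|w^s\|,\|w^u\|)$. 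Hence $\|g_n\|_\infty\le[\phi]_\alpha M^\alpha\lambda^{-|n|\alpha}$, the series converges uniformly to a bounded function, and $\frac{d\tau_*\mu}{d\mu}$ is in particular bounded away from $0$ and $\infty$.

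For H\"older continuity of $x\mapsto\sum_n g_n(x)$ I would fix $x,y$ with $r:=d_{\T^2}(x,y)$ small and estimate $|g_n(x)-g_n(y)|$ two ways. The \emph{size} bound $|g_n(x)-g_n(y)|\le|g_n(x)|+|g_n(y)|\le 2[\phi]_\alpha M^\alpha\lambda^{-|n|\alpha}$ is efficient for large $|n|$. The \emph{oscillation} bound comes from $d_{\T^2}(L^n a,L^n b)\le\|L^n\|_{\mathrm{op}}\,d_{\T^2}(a,b)\le C\lambda^{|n|}d_{\T^2}(a,b)$ (valid since $L$ is diagonalizable with eigenvalues of modulus $\lambda^{\pm1}$) applied to $(a,b)=(x+v,y+v)$ and $(a,b)=(x,y)$, together with translation invariance $d_{\T^2}(x+v,y+v)=d_{\T^2}(x,y)$; it gives $|g_n(x)-g_n(y)|\le 2[\phi]_\alpha C^\alpha\lambda^{|n|\alpha}r^\alpha$ and is efficient for small $|n|$. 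Summing the oscillation bound over $|n|\le N$ and the size bound over $|n|>N$ gives $\big|\sum_n g_n(x)-\sum_n g_n(y)\big|\le C_1\lambda^{N\alpha}r^\alpha+C_2\lambda^{-N\alpha}$; taking $N=\lfloor\tfrac12\log_\lambda(1/r)\rfloor$ balances the two terms and produces a bound of order $r^{\alpha/2}$. Thus $\sum_n g_n$ is $(\alpha/2)$-H\"older (for $r$ bounded away from $0$ this is automatic from boundedness), and composing with $\exp$, which is Lipschitz on the bounded range of $\sum_n g_n$, finishes the proof.

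The main obstacle is that the terms $g_n$, though individually H\"older, have H\"older seminorms growing like $\lambda^{|n|\alpha}$, so they cannot be summed term by term; the interpolation between the exponentially small sup bound and the exponentially large oscillation bound is the crux, and it accounts for the loss of exponent from $\alpha$ to $\alpha/2$. The other point requiring care is extracting the two lifts $w^s\in E^s$ and $w^u\in E^u$ of the homoclinic point $v$, which is exactly what makes $g_n$ decay as $n\to+\infty$ \emph{and} as $n\to-\infty$.
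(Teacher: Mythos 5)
Your proof is correct and follows essentially the same route as the paper's: both split the sum at an index $N\sim\tfrac12\log_\lambda(1/r)$, using the oscillation bound $\lambda^{|n|\alpha}r^\alpha$ for small $|n|$ and the homoclinic size bound $\lambda^{-|n|\alpha}$ for large $|n|$, yielding H\"older exponent $\alpha/2$ (the paper phrases the split as $d(x,y)<\lambda^{-2k}$ with cutoff at $|n|=k$). Your explicit extraction of the stable and unstable lifts $w^s$, $w^u$ of $v$ is a slightly more detailed justification of the paper's assumed estimate $d(L^nv,0)\le C_1\lambda^{-|n|}$, but the argument is otherwise identical.
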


\begin{proof}
Let $\lambda$ be the expanding eigenvalue of $L$. Then there exist $C_1$ and $C_2$ such that
$d(L^nv,0)\le C_1\lambda^{-|n|}$ and $d(L^nx,0)\le C_2\lambda^{|n|}d(x,0)$
for all $n\in\Z$.
We let $C_3>0$ and $\alpha\in(0,1)$ be such that $|\phi(x)-\phi(y)|\le C_3d(x,y)^\alpha$ for all $x,y\in\T^2$. 

We define
$$
\theta(x)=\sum_{n\in\Z}\left[\phi(L^n(x+v))-\phi(L^nx)\right].
$$
Suppose $x,y\in\T^2$ satisfy $d(x,y)<\lambda^{-2k}$ for some $k$. Then we calculate
\begin{align*}
    |\theta(y)&-\theta(x)|
    \le\sum_{n\in\Z}
    \big|\phi(L^n(y+v))-\phi(L^ny)-\phi(L^n(x+v))+\phi(L^nx)\big|\\
    &\le \sum_{|n|\le k}\big[|\phi(L^n(y+v))-\phi(L^n(x+v))|+
    |\phi(L^n(y))-\phi(L^n(x))|\big]\\
    &+\sum_{|n|>k}\big[|\phi(L^n(y+v))-\phi(L^n(y))|+|\phi(L^n(x+v))-\phi(L^n(x))|\big].
\end{align*}
We bound the sums by geometric series and obtain
\begin{equation*}
    |\phi(L^ny)-\phi(L^nx)|\le C_3(C_2\lambda^{|n|}d(x,y))^\alpha\le C_3C_2^\alpha
    \lambda^{-(2k-|n|)\alpha},
\end{equation*}
with the same bound for $|\phi(L^n(y+v))-\phi(L^n(x+v))|$.
Likewise, $|\phi(L^n(x+v))-\phi(L^n(x))|\le C_3C_1^\alpha\lambda^{-|n|\alpha}$
with the same bound for $|\phi(L^n(y+v))-\phi(L^n(y))|$. Summing the geometric series, 
we obtain 
$|\theta(y)-\theta(x)|\le K\lambda^{-k\alpha}$,
where $K=4C_3(C_1^\alpha+C_2^\alpha)/(1-\lambda^{-\alpha})$
showing that $\theta$ is H\"older as required.     
\end{proof}

\subsection{Coding for Toral Automorphisms}\label{sec:coding}

Let $L$ be a mixing toral automorphism of $\T^2$ and we let $\mathcal P$ be a generating Markov partition,
which we assume to consist of (closed) rectangles whose boundaries 
are pieces of the unstable and stable manifolds
through the origin. We make the further assumption that if $A$ and $B$ are elements of the partition, then
$(A+v)\cap B$ is connected (either a rectangle or an empty set). This condition is automatically satisfied if
$\diam(\mathcal P)<\frac 12$, and so may be assumed without loss of generality by replacing
$\mathcal P$ with a Markov partition of the form $\bigvee_{j=0}^{m-1} L^{-j}\mathcal P$ if necessary.

For $\cA=\{0,...,\# (\cP)-1\}$ let $\Omega\subset \cA^\Z$ be the corresponding shift of finite type and let $\pi\colon \Omega\to \T^2$ be the
corresponding finite-to-one factor map from $(\Omega,\sigma)$ to $(\T^2,L)$.  The map $\pi$ is one-to-one on a
set of measure 1 with respect to any invariant measure on $\Omega$.
We equip $\Omega$
with the standard metric on $\Omega$ where $d(\omega,\omega')=2^{-n}$ 
if $\omega_j=\omega'_j$ whenever $|j|<n$, but
$\omega_{\pm n}\ne \omega'_{\pm n}$.

If $\phi$ is a H\"older continuous function on $\T^2$, we let $\mu$ be its equilibrium measure. We also
set $\psi=\phi\circ\pi$ to be the corresponding potential on $\Omega$ and let $\nu$ be the equilibrium
measure of $\psi$. Since $\pi$ is one-to-one $\nu$-almost everywhere, 
$\pi_*\nu=\mu$. Let $\Omega^+\subset \cA^{\N_0}$ be the one-sided version of $\Omega$
that is the image of $\Omega$ under the map $p_+\colon \cA^\Z\to \cA^{\N_0}$ defined by 
$p_+(\omega)_n=\omega_n$ for $n\ge 0$.
Similarly, let $\Omega^-\subset \cA^{-\N}$ be the image of $\Omega$ under the restriction 
map $p_-\colon \cA^\Z\to \cA^{-\N}$. Then $\nu^+=(p_+)_\ast\nu$ and $\nu^-=(p_-)_\ast\nu$ are the measures corresponding to $\nu$ on $\Omega^+$ and $\omega^-$ respectively.

The main symbolic result we are using 
is the local product structure of $\nu$. Ruelle proves in \cite[Lemma 5.9]{Ruelle} 
that $\nu$ has \emph{local product structure}, i.e. 
$d\nu(\omega)=\hat{\varrho}(\omega)\,d\hat{\nu}^+(p_+(\omega))\,d\hat{\nu}^-(p_-(\omega))$
where $\hat{\nu}^+$ is a probability measure on $\Omega^+$, $\hat{\nu}^-$ is a probability 
measure on $\Omega^-$, and $\hat{\varrho}$ is a positive continuous function on $\Omega$. 
Furthermore, it is shown in \cite[Lemma 5.23]{Ruelle} that $\hat{\varrho}$ is H\"older on 
$\Omega$, and the functions  $\hat{\varrho}^+(\omega^+)=
\int\hat{\varrho}(\omega)\, d\hat{\nu}^-(\omega^-)$, 
$1/\hat{\varrho}^+(\omega^+)$ are H\"older on $\Omega^+$. 
Analogous statements hold for $\hat{\varrho}^-(\omega^-)$. 
Note that for each $\omega^+\in\Omega^+$ the integral is taken 
over the set $\{\omega^-\in \Omega^-\colon \omega^-_{-1}\omega^+_0
\text{ is legal in $\Omega$}\}$. 
In this case the measure $\nu^+$ on $\Omega^+$ is 
given by $d\nu^+=\varrho^+(\omega^+)\,d\hat{\nu}^+$; similarly for $\nu^-$. 

We are mostly concerned with the structure of $\nu$ on the cylinder 
$[0]=\{\omega\in\Omega: \omega_0=0\}$. We let 
$A^-=\{\omega^-\in \Omega^-\colon \omega^-_{-1}0\text{ is legal in $\Omega$}\}$. 
For $\omega^-\in A^-$ and $\omega^+\in p_+([0])$ we write 
$\varrho^+(\omega^+)=\int_{A^-}\hat{\varrho}(\omega^-\omega^+)\, d\hat{\nu}^-(\omega^-)$, 
$\varrho^-(\omega^-)=\int_{[0]}\hat{\varrho}(\omega^-\omega^+)\, d\hat{\nu}^+(\omega^+)$, 
and   $\varrho(\omega^-\omega^+)=\frac{\hat{\varrho}(\omega^-\omega^+)} 
{\varrho^-(\omega^-)\varrho^+(\omega^+)}$, 
so that $d\nu(\omega)=\rho(\omega)\,d\nu^+(\omega^+)\,d\nu^-(\omega^-)$. 
In particular,  
\begin{equation}\label{eq:marginal}
\begin{split}
    \int_{A^-}\varrho(\omega^-\omega^+)\,d\nu^-(\omega^-)&= 
    \int_{A^-}\frac{\hat{\varrho}(\omega^-\omega^+)} 
    {\varrho^-(\omega^-)\varrho^+(\omega^+)}\,d\nu^-(\omega^-)\\
    &=\frac{1}{\varrho^+(\omega^+)}\int_{A^-}\hat{\varrho}(\omega^-\omega^+)\,d\hat{\nu}^-(\omega^-)\\
    &=1
\end{split}
\end{equation}
We summarize the above in the following lemma which is frequently used throughout this article.

\begin{lem}[Ruelle \cite{Ruelle}]\label{lem:Ruelle}
Let $\psi$ be a H\"older continuous function on a mixing shift of finite type $\Omega$ and let $\nu$ be
its equilibrium state.
Then $\nu$ has \emph{local product structure}. That is, on the cylinder set $[0]$ 
there exist a positive H\"older continuous function $\varrho(\omega)$
such that $d\nu(\omega)=\varrho(\omega)\,d\nu^+(\omega^+)\,d\nu^-(\omega^-)$ 
where $\nu^-$, $\nu^+$ are the restrictions of $\nu$ to $\Omega^+$, $\Omega^-$ respectively,
and $\omega$ denotes the concatenation of $\omega^-$ and $\omega^+$.
\end{lem}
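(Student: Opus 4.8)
The statement is essentially a repackaging of Ruelle's construction of the local product structure of equilibrium (Gibbs) states on subshifts of finite type, so the plan is to assemble it from \cite[Lemma 5.9]{Ruelle} and \cite[Lemma 5.23]{Ruelle} together with the normalization carried out in the discussion preceding the lemma. By \cite[Lemma 5.9]{Ruelle}, since $\psi$ is H\"older on the mixing shift of finite type $\Omega$ and $\nu$ is its equilibrium state, $\nu$ decomposes as $d\nu(\omega)=\hat\varrho(\omega)\,d\hat\nu^+(p_+\omega)\,d\hat\nu^-(p_-\omega)$ for probability measures $\hat\nu^\pm$ on $\Omega^\pm$ and a positive continuous function $\hat\varrho$ on $\Omega$. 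By \cite[Lemma 5.23]{Ruelle}, $\hat\varrho$ is in fact H\"older on $\Omega$, the fibre integral $\hat\varrho^+(\omega^+)=\int\hat\varrho(\omega)\,d\hat\nu^-(p_-\omega)$ and its reciprocal are H\"older on $\Omega^+$, and symmetrically for $\hat\varrho^-$ on $\Omega^-$; consequently $d\nu^+=\varrho^+\,d\hat\nu^+$ and $d\nu^-=\varrho^-\,d\hat\nu^-$ with $\varrho^\pm$, $1/\varrho^\pm$ H\"older.

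It then remains to pass from the auxiliary measures $\hat\nu^\pm$ to the genuine restrictions $\nu^\pm$ of $\nu$ on the cylinder $[0]$. Here I would set $\varrho^+(\omega^+)=\int_{A^-}\hat\varrho(\omega^-\omega^+)\,d\hat\nu^-(\omega^-)$ and $\varrho^-(\omega^-)=\int_{[0]}\hat\varrho(\omega^-\omega^+)\,d\hat\nu^+(\omega^+)$ and put $\varrho(\omega^-\omega^+)=\hat\varrho(\omega^-\omega^+)/(\varrho^-(\omega^-)\varrho^+(\omega^+))$. Since any $\omega^-\in A^-$ and $\omega^+\in p_+([0])$ concatenate to a legal word, these integrals are over fixed domains and $\varrho^\pm$, $1/\varrho^\pm$ inherit H\"older regularity and positivity from $\hat\varrho$; hence $\varrho$ is a positive H\"older function on $[0]$ as a quotient of H\"older functions bounded away from $0$ and $\infty$. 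Substituting $d\hat\nu^\pm=d\nu^\pm/\varrho^\pm$ into the decomposition from \cite[Lemma 5.9]{Ruelle} yields $d\nu(\omega)=\varrho(\omega)\,d\nu^+(\omega^+)\,d\nu^-(\omega^-)$, and the marginal normalization $\int_{A^-}\varrho(\omega^-\omega^+)\,d\nu^-(\omega^-)=1$ is the short computation \eqref{eq:marginal}.

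The only point needing genuine care is the bookkeeping of legality constraints, and this is precisely why the statement is \emph{local}. Globally one cannot write $d\nu=\varrho\,d\nu^+\,d\nu^-$ because an arbitrary pair $(\omega^-,\omega^+)$ need not concatenate to a legal bi-infinite word, so the fibre integrals depend on the constraint sets. Restricting to the cylinder $[0]$ makes that dependence trivial --- legality of $\omega^-\omega^+$ there depends only on $\omega^-_{-1}$ and on $\omega^+_0=0$ --- which is what lets the densities be honestly H\"older on $[0]$ and lets the normalization close; the substantive analytic inputs, namely the existence and H\"older regularity of $\hat\varrho$ and of $\hat\varrho^\pm$, are imported wholesale from Ruelle's book.
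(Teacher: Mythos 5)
Your argument reproduces essentially verbatim the discussion the paper places immediately before the lemma, which the paper itself says the lemma merely "summarizes": the existence of the decomposition from Ruelle's Lemma 5.9, the H\"older regularity from Lemma 5.23, the renormalization passing from $\hat\nu^\pm$ to $\nu^\pm$ via $\varrho^\pm$, and the marginal normalization \eqref{eq:marginal}. This is the paper's approach, correctly executed, with the added (and correct) remark about why restricting to $[0]$ keeps the legality constraint fixed.
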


It is shown by Walters in \cite{Walters} that under the assumptions of the above lemma there is a H\"older function $g:\Omega^+\to (0,1)$ such that $\log g$ is cohomologous to $\phi$ and  $\nu^+$ is the unique $g$-measure for $g$, i.e. for $\omega^+\in\Omega^+$
\begin{equation}\label{eq:def_g-func}
    g(\omega^+)=\lim_{\substack{{\rm diam}(S)\to 0\\\nu^+(S)\ne 0,\, \omega^+ \in S}}\frac{\nu^+(S)}{\nu^+(\sigma_+(S))}.
\end{equation}


Since the map $\pi\colon\Omega\to\T^2$ is H\"older continuous, given a H\"older 
continuous function $\phi$ on the torus,
we see that $\phi\circ\pi$ is H\"older; however many H\"older continuous functions on the shift cannot be
written in the form $\phi\circ\pi$. We call a function $f$ defined on $\Omega$ 
\emph{torus-H\"older} if it can be
written in the form $\phi\circ\pi$ where $\phi$ is a H\"older continuous function of the torus. 
A subset $R$ of $\Omega$ is called a \emph{rectangle} if it satisfies the following  conditions
\begin{itemize}
    \item $\omega,\omega'\in R$ implies
    the concatenation $p_-(\omega)p_+(\omega')$ belongs to $R$;
    \item $\pi(R)$ is connected;
    \item $\diam(\pi(R))<\frac 12$;
     \item $R=\pi^{-1}(\pi (R))$.
\end{itemize}


\begin{lem}
Let $L$ be an Anosov automorphism of $\T^2$ and let $\mathcal P$ be a Markov partition as described above.
Let $\Omega$ be the corresponding shift of finite type and let $\pi\colon\Omega\to\T^2$ be the natural
factor map.
Let $R$ be a rectangular subset of a cylinder set $[i]$ in $\Omega$
and suppose that $f\colon R\to\R$ is a H\"older continuous function.
If $f$ has the property that $f(\omega)=f(\omega')$ whenever
$\pi(\omega)=\pi(\omega')$, then $f$ may be expressed as
$h\circ\pi$ where $h$ is a H\"older continuous function defined on $\pi(R)\subset\T^2$.
\end{lem}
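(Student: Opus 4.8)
The plan is to define $h$ by the obvious formula and then extract its H\"older regularity from that of $f$, using the coding geometry together with the hypothesis that $f$ is constant on $\pi$-fibres, which lets one ``teleport'' for free across the several codings of a single point. First, since $R=\pi^{-1}(\pi(R))$, for $x\in\pi(R)$ the whole fibre $\pi^{-1}(x)$ lies in $R$, and $f$ is constant on it; so $h(x):=f(\omega)$ for any $\omega\in\pi^{-1}(x)$ is well defined (and continuous, by a compactness argument). Property (i) in the definition of a rectangle, together with the fact that $\pi$ intertwines the symbolic and geometric bracket maps, shows that $\pi(R)$ is closed under the bracket on $\T^2$; being connected and of diameter $<\tfrac12$ it is a product rectangle $\pi(R)=\{[a,b]:a\in A,\ b\in B\}$ with $A$ a sub-arc of a local unstable leaf and $B$ a sub-arc of a local stable leaf, and $(a,b)\mapsto[a,b]$ is bi-Lipschitz. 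Since ``H\"older separately in $a$ and in $b$ with uniform constants'' implies ``H\"older jointly'' for the product metric, it suffices to prove that $h$ is H\"older along stable leaves and along unstable leaves with uniform constants; the two cases are exchanged by $L\leftrightarrow L^{-1}$, and since $h$ is continuous it is enough to treat pairs with non-exceptional orbits (orbit disjoint from $\partial\mathcal P$), which are dense in $\pi(R)$ and automatically lie in $\mathrm{int}(P_i)$.

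So I would fix $x,z\in\mathrm{int}(P_i)$ on a common local stable leaf with non-exceptional orbits, set $\epsilon=d(x,z)$, and let $\gamma\subseteq\pi(R)\cap\mathrm{int}(P_i)$ be the stable segment joining them. Two geometric facts drive the argument. (1) For every $n\ge 0$ the segment $L^n\gamma$ is disjoint from $\partial^u\mathcal P$, because $L^{-n}\partial^u\mathcal P\subseteq\partial^u\mathcal P$ and $\partial^u\mathcal P$ does not meet $\mathrm{int}(P_i)$; hence $L^nx$ and $L^nz$ can always be given a common Markov label at every forward time. (2) Set $M=\lfloor\log_\lambda(1/\epsilon)\rfloor$, with $\lambda$ the expanding eigenvalue. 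The $u$-boundary of $\bigvee_{j=0}^{M}L^{j}\mathcal P$ inside $P_i$ is a finite union of $u$-arcs running fully across $P_i$, at stable heights that are $\ge w_0\lambda^{-M}$-separated, where $w_0=\min_a(\text{stable width of }P_a)>0$: indeed $L^{-M}$ carries such an atom to a strip of full stable width inside some rectangle, so the atom has stable width at least $w_0\lambda^{-M}$. Since $\epsilon\lambda^{M}$ is bounded and $\gamma$ has stable length comparable to $\epsilon$, the segment $\gamma$ meets at most $k=O(1)$ of those arcs; equivalently, the iterates $L^{-m}\gamma$ $(1\le m\le M)$ cross $\partial^u\mathcal P$ at only boundedly many points of $\gamma$. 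Cutting $\gamma$ there gives points $x=q_0,q_1,\dots,q_k=z$ along $\gamma$ with $k$ independent of $\epsilon$.

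For consecutive break-points $q_j,q_{j+1}$, the sub-segment between them has $L^{-m}(\cdot)$ disjoint from $\partial^u\mathcal P$ for $0\le m\le M$ (by the choice of break-points) and $L^{n}(\cdot)$ disjoint from it for all $n\ge 0$ (by fact (1)); hence $L^nq_j$ and $L^nq_{j+1}$ admit a common Markov label for every $n\ge-M$. Choosing these common labels and completing for $n<-M$ — possible, since any locally admissible one-sided word extending a valid label of the backward orbit of $L^{-M}q_j$ yields a full coding of $q_j$ — produces $\omega^{(j)}\in\pi^{-1}(q_j)$ and $\widetilde\omega^{(j+1)}\in\pi^{-1}(q_{j+1})$ that agree on $\{n\ge-M\}$, so $d_\Omega(\omega^{(j)},\widetilde\omega^{(j+1)})\le 2^{-M}$. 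Now chain, using that $f$ is constant equal to $h(q)$ on all of $\pi^{-1}(q)$:
\[
|h(x)-h(z)|\le\sum_{j=0}^{k-1}|h(q_j)-h(q_{j+1})|
=\sum_{j=0}^{k-1}\big|f(\omega^{(j)})-f(\widetilde\omega^{(j+1)})\big|
\le k\,C_f\,2^{-M\alpha}\le C\,d(x,z)^{\gamma},
\]
where $C_f,\alpha$ are the H\"older data of $f$ and $\gamma=\alpha\log 2/\log\lambda>0$. This is the required uniform estimate along stable leaves; the unstable case is symmetric, and combining the two via the product structure of $\pi(R)$ shows $h$ is H\"older on $\pi(R)$, whence $f=h\circ\pi$.

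The hard part is fact (2): making rigorous the uniform lower bound $w_0\lambda^{-M}$ on the spacing of the depth-$M$ Markov boundaries inside $P_i$, and hence the $O(1)$ bound on the number of break-points. This rests on the Markov property (refining by past coordinates cuts a rectangle only in the unstable direction, while $L^{-M}$ restores full stable width) together with the linearity of $L$ (a single uniform contraction rate); some care is also needed for points on $\partial\mathcal P$, which is why I would first reduce to non-exceptional orbits and then invoke continuity of $h$. It is worth stressing that fibre-constancy is doing real work: the codings of $x$ and $z$ can be far apart in $\Omega$ even for very close $x,z$ — exactly when some backward iterate of $\gamma$ straddles $\partial^u\mathcal P$ — and the only way to bridge the gap is to route the estimate through the multiple codings of the break-points, on which $f$ is forced to agree.
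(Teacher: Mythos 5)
Your proposal is correct and follows essentially the same route as the paper: define $h$ on fibres via fibre-constancy of $f$, split $d(x,y)$ into stable and unstable displacements using the bracket $\llbracket x,y\rrbracket_A$, and exploit the fact that $f$ agrees on the several codings of a partition-boundary point to bridge the symbolic discontinuity and close the H\"older estimate. The only difference is bookkeeping: the paper chains through a single boundary point $w$ per segment (asserting somewhat tersely that $x$ and $w$, and $w$ and $z$, each lie in a common element of the refined partition), whereas you chain through $O(1)$ break-points and justify the count by the uniform $\lambda^{-M}$ spacing of the depth-$M$ Markov boundaries --- a slightly more careful rendering of the same step.
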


\begin{proof}
Since $f(\omega)=f(\omega')$ when $\pi(\omega)=\pi(\omega')$, we see that $f$ takes the same
value on each element of $\pi^{-1}(x)$ for any $x\in\pi(R)$. Hence $h(x):=f(\pi^{-1}x)$ is
well-defined on the rectangle $A:=\pi(R)$ which has sides parallel to the stable and unstable directions.
Since $f$ is H\"older continuous, let $c$ and $\alpha$ be such that
$|f(\omega)-f(\omega')|\le c\alpha^n$ whenever $d(\omega,\omega')\le 2^{-n}$.

Since $A$ is a rectangle in $\T^2$, 
we define for $x,y\in A$, 
$\llbracket x,y\rrbracket_A$ to be the unique point
$z$ in $A$ such that the line segments $[x,z]$ and $[z,y]$ lie in $A$ with $[x,z]$ in the stable
direction and the $[z,y]$ in the unstable direction. 
We now estimate $|h(x)-h(z)|$. An exactly similar estimate applies to $|h(z)-h(y)|$.
Let $C$ be the constant (depending only on the angle between the stable and unstable directions)
so that if $x,y$ lie in $A$ then $d(x,\llbracket x,y\rrbracket_A),d(y,\llbracket
x,y\rrbracket_A)\le Cd(x,y)$.
Let $\lambda$ be the expanding eigenvalue and let $n$ be the smallest natural number such that
$C^{-1}\diam(\mathcal P)\lambda^{-n}\le d(x,y)$.

Let $x=\pi(\xi)$ and $\llbracket x,y\rrbracket_A=\pi(\zeta)$.
Then either $x$ and $\llbracket x,y\rrbracket_A$ lie in the same element of
$L^{j}\mathcal P$ for each $0\le j<n$, in which case $|h(x)-h(\llbracket x,y\rrbracket_A)|
=|f(\xi)-f(\zeta)|\le c\alpha^n$
or there exists a point $w$ in $\partial L^{-(n-1)}\mathcal P\cap [x,\llbracket x,y\rrbracket_A]$.
Since $d(x,w)$ and $d(\llbracket x,y\rrbracket_A,w)$ are less than 
$\diam(\mathcal P)\lambda^{-(n-1)}$ and $w$ is on the boundary,
$x$ and $w$ must belong to a common element of
$L^{-(n-1)}\mathcal P$ and similarly for $w$ and $\llbracket x,y\rrbracket_A$, see Fig \ref{fig:xwz}.

\begin{figure}[ht]
\begin{tikzpicture}
  \begin{axis} [clip=false,
  x=0.5cm,y=0.5cm,
  axis y line=left,
  axis x line=middle,
  ytick=\empty,
  xtick=\empty,
  ymin=-0.5, ymax=8.0,
  xmin=-0.5,xmax=8.0,
  axis line style={draw=none}
  ] \node[anchor=east] (source) at (axis cs:0,8.0){\small{$\T^2$}};
    \node[anchor=east] (source) at (axis cs:4.0,6.5){\small{$A_0$}};
    \draw[very thick](axis cs:0,0) -- (axis cs:8,0) -- (axis cs:8,8) -- (axis cs:0,8) -- (axis cs:0,0);
    \draw[thick](axis cs:2,1.236) -- (axis cs:7.5,0.618*7.5) -- (axis cs:5.8,2.236*7.5-1.618*5.8) -- (axis cs:0.3,4.472-1.618*0.3) -- (axis cs:2,1.236);
    \filldraw (axis cs:2.2,3) circle (2pt);
    \node[anchor=south] (source) at (axis cs:2.2,3){\small{$x$}};
    \draw[thick] (axis cs:2.2,3) -- (axis cs:5.5,0.618*5.5+1.6404);
    \filldraw (axis cs:5.5,0.618*5.5+1.6404) circle (2pt);
    \node[anchor=south] (source) at (axis cs:5.5,0.618*5.5+1.6404){\small{$\llbracket x,y\rrbracket_A$}};
    \filldraw (axis cs:4,0.618*4+1.6404) circle (2pt);
    \node[anchor=south] (source) at (axis cs:4.1,0.618*4.1+1.6404){\small{$w$}};
    \draw[thick] (axis cs:3.05,1.618*.95+4.1124) -- (axis cs:4.7,-1.618*0.7+4.1124);
 \end{axis}
\end{tikzpicture}
\caption{On $\T^2$ the unstable 
and stable directions are shown as north-east and north-west
respectively.}
\label{fig:xwz}
\end{figure}
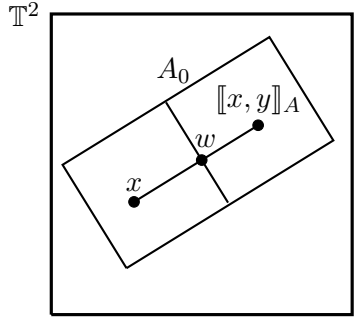


Now write $w=\pi(\eta)=\pi(\eta')$ where
$\eta_{-\infty}^{n-1}=\xi_{-\infty}^{n-1}$ and ${\eta'}_{-\infty}^{n-1}=\zeta_{-\infty}^{n-1}$.
We then have $|h(x)-h(z)|=|f(\xi)-f(\zeta)|\le |f(\xi)-f(\eta)|+|f(\eta')-f(\zeta)|\le 2c\alpha^n,$
where we made use of the fact that $f(\eta)=f(\eta')$. Combining this with the analogous estimate
for $|h(\llbracket x,y\rrbracket_A)-h(y)|$, we see $|h(x)-h(y)|\le 4c\alpha^n\le 4c
\big(C d(x,y)/(\lambda\diam(\mathcal P)\big)^{-\log\alpha/\log\lambda}$, so that $h$
is H\"older as required.
\end{proof}

\section{Anosov realization}

In this section we show that given a hyperbolic automorphism $L$ for any positive 
H\"{o}lder continuous potential $\phi$ with zero topological pressure there exists 
a conjugate Anosov diffeomorphism $T$ for which the geometric potential is cohomologous to $\phi$.

\begin{thm}
    Let $L$ be an Anosov automorphism of $\T^2$ and let $\mu$ be the equilibrium state for a
    H\"older continuous potential $\phi$ with $P_{\rm top}(L,\phi)=0$.
    Then there exists a $C^{1+H}$-atlas on $\T^2$ with respect to which $L$ is an 
    Anosov diffeomorphism with H\"older derivative and its geometric potential is cohomologous to $\phi$.    
\end{thm}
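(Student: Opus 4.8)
The plan is to read the new differentiable structure off the local product structure of the equilibrium state $\mu$, using the coding $\pi\colon\Omega\to\T^2$ of Section~\ref{sec:coding} as the bookkeeping device. I work on the rectangular subsets $R$ of $\Omega$ introduced in Section~\ref{sec:coding}, for which $\pi$ respects the product structure; on such a set I identify points of $\pi(R)$ with pairs $(\omega^-,\omega^+)$, where $\omega^-$ is the stable and $\omega^+$ the unstable symbolic coordinate. By Lemma~\ref{lem:Ruelle} I write $\mu$ locally as $\varrho(\omega)\,d\nu^{+}(\omega^{+})\,d\nu^{-}(\omega^{-})$, so that, by \eqref{eq:marginal} and its mirror image, each conditional $\mu_{\omega^-}:=\varrho(\omega^-,\cdot)\,d\nu^{+}$ is a probability measure on the unstable slice through $\omega^-$. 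I then define the chart
$$
  \Phi_R(\omega^-,\omega^+)=\bigl(b\,G_R(\omega^-),\ a\,\mu_{\omega^-}(\{r^+\le\omega^+\})\bigr),
$$
where $G_R$ is the normalized cumulative distribution function of $\nu^-$ in the stable coordinate and $ab=\mu(\pi(R))$: straighten the unstable conditionals of $\mu$ to Lebesgue in the new horizontal direction, and straighten the transverse measure to Lebesgue in the new vertical direction. Taking finitely many such charts covering $\T^2$, each $\Phi_R$ is a homeomorphism onto the box $[0,b]\times[0,a]$ (the cumulative distribution functions of the fully supported nonatomic Gibbs measures $\nu^-$ and $\mu_{\omega^-}$ are strictly increasing, and $\varrho$ is continuous). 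Disintegrating $\mu=\int\mu_{\omega^-}\,d\nu^-(\omega^-)$ shows at once that $(\Phi_R)_*\mu=\mathsf{Leb}$ on each box; hence the transition maps push Lebesgue to Lebesgue, so the ``Lebesgue measure read in these charts'' is globally well defined and equals $\mu$, and $L$, which preserves $\mu$, is automatically area preserving in the resulting structure.

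Assume for now that this atlas is $C^{1+\alpha}$ and that $L$ is $C^{1+\alpha}$ with respect to it. In the new coordinates $L$ sends the horizontal foliation (the images of the unstable slices) to itself, so locally it has skew form $(y,t)\mapsto(\Lambda(y),\Xi(y,t))$; evaluating $\partial_t\Xi$ as the scale-$n$ limit of the ratio of $\mu$-masses of unstable cylinders before and after applying $L$, the $g$-measure identity \eqref{eq:def_g-func} for $\nu^+$ and the H\"older continuity of $\varrho$ give
$$
  |D_uL|(\omega)=\frac{\varrho(\sigma\omega)}{\varrho(\omega)}\cdot\frac{1}{g(\omega^+)},
$$
which is uniformly $>1$ after finitely many iterates because $g$ takes values in $(0,1)$; the stable direction is symmetric. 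Thus $L$ is uniformly hyperbolic in the new atlas, with the horizontal slices and the (curved) vertical fibres as its invariant bundles, hence a $C^{1+\alpha}$ Anosov diffeomorphism of this $\T^2$. For the geometric potential there are two routes. Symbolically, $-\log|D_uL|=\log g+(\log\varrho-\log\varrho\circ\sigma)$ is cohomologous on $\Omega$ to $\log g$, hence to $\phi\circ\pi$ by Walters' theorem, and one descends this to the torus via the descent lemma of Section~\ref{sec:coding}. Alternatively and more cleanly: $\mu$ is the SRB (equilibrium) measure both of $\phi$ and of the geometric potential of $L$ in the new structure; the latter is H\"older on $(\T^2,L)$ (the identifying homeomorphism is bi-H\"older by the Gibbs bounds for $\mu$) and has topological pressure $0$ (the geometric potential of an area preserving $C^{1+\alpha}$ Anosov surface diffeomorphism has pressure $0$ by Pesin's formula); two H\"older potentials with the same equilibrium state and pressure differ by a coboundary, so the geometric potential is cohomologous to $\phi$.

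The crux is the regularity that was postponed: that each transition map $\Phi_{R'}\circ\Phi_R^{-1}$ — and with it the transverse derivative $\partial_y\Xi$ — is $C^{1+\alpha}$ rather than merely continuous. Up to the fixed H\"older rescalings by the $\varrho$'s and the $G$'s, such a transition map is the stable holonomy of $\mu$ between two unstable slices followed by a reparametrization, and its differentiability and H\"older regularity amount to the statement that sliding transversally along stable leaves changes the conditionals $\mu_{\omega^-}$ by a H\"older-continuous positive density — that is, by the Radon--Nikodym cocycle \eqref{eq:RNderivative} of $\mu$. This is exactly where Lemma~\ref{lem:rnderivHolder}, together with its routine localization from the global translations $x\mapsto x+v$ to local stable holonomies, does the work: it supplies the H\"older continuity of that density, hence of the partial derivatives of the transition maps and of $\partial_y\Xi$. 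Assembling these estimates, $DL$ in the new charts is an invertible (determinant $\pm1$) H\"older matrix field, the atlas is $C^{1+\alpha}$, and $L$ is a $C^{1+\alpha}$, area preserving, Anosov diffeomorphism with respect to it whose geometric potential is cohomologous to $\phi$. I expect this holonomy-regularity estimate to be the main obstacle; everything else is bookkeeping with the coding and Ruelle's local product structure.
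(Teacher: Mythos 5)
Your chart is genuinely different from the paper's, and that difference is where the proof breaks. You push $\mu$ to Lebesgue directly by defining
$$
\Phi_R(\omega^-,\omega^+)=\bigl(b\,G_R(\omega^-),\ a\,\mu_{\omega^-}(\{r^+\le\omega^+\})\bigr),
$$
so the second coordinate $\psi(\omega^-,\omega^+):=\mu_{\omega^-}(\{r^+\le\omega^+\})=\int\mathbb{1}_{\{\le\omega^+\}}\varrho(\omega^-,\cdot)\,d\nu^+$ depends on \emph{both} symbolic coordinates through the product-structure density $\varrho$. Consequently the transition maps (and the map $L$) take the skew form $(s,t)\mapsto(g_1(s),g_2(s,t))$, and making the atlas $C^{1+H}$ requires $\partial_s g_2$ to exist and be H\"older. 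But $\varrho$ is only H\"older continuous in $\omega^-$ (that is all Lemma~\ref{lem:Ruelle} gives, and it is sharp), so $\psi$ is only H\"older in $\omega^-$ and $\partial_s g_2$ does not exist. The reduction you propose does not save this: Lemma~\ref{lem:rnderivHolder} and the Radon--Nikodym cocycle \eqref{eq:RNderivative} give H\"older regularity of the stable-holonomy \emph{Jacobian} (the density $\varrho(\eta^-,\omega^+)/\varrho(\omega^-,\omega^+)$ as a function of $\omega^+$ for \emph{fixed} $\omega^-,\eta^-$), which controls the derivative of $g_2$ in $t$, not the transverse derivative in $s$. Nothing in Section~\ref{sec:R-N_deriv} yields differentiability of $\varrho$ in the stable coordinate, and in general one cannot expect it.

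The paper sidesteps this entirely by using a \emph{product} chart: $\xi=(\xi_1,\xi_2)$ with $\xi_1$ the $\mu$-mass to the left of the local stable leaf and $\xi_2$ the $\mu$-mass below the local unstable leaf; because the partition rectangle has full extent in the transverse direction, the marginal identity \eqref{eq:marginal} collapses each $\xi_i$ to a CDF of $\nu^+$ (resp.\ $\nu^-$) alone, so $\xi_1$ depends only on the unstable coordinate and $\xi_2$ only on the stable one. Every transition map $\xi\circ\tau_w\circ\xi^{-1}$ then factors as $(a,b)\mapsto(f_1(a),f_2(b))$, there is no cross-derivative to control, and each $f_i'$ is computed as the limit $\ell(\omega)$ which Lemma~\ref{lem:ell-properties} shows to be torus-H\"older using exactly the R-N regularity. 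The price the paper pays is that $\xi_\ast\mu$ is \emph{not} Lebesgue (it has density $\varrho\circ\xi^{-1}$), so area-preservation for $T$ in Theorem~\ref{thm:main} requires a separate normalization; but that issue is tractable precisely because $\varrho$ is positive and H\"older, whereas the cross-derivative problem in your skew chart is not tractable with the available regularity. Your derivative formula $|D_uL|=\varrho\circ\sigma/\varrho\cdot g^{-1}$ is correct \emph{given} your chart, and the cohomology argument via Walters (or via uniqueness of equilibrium states plus Pesin's formula) is a clean alternative to the paper's Liv\v{s}ic argument; but neither can be used until the atlas itself is shown to be $C^{1+H}$, and with the skew chart it is not. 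The fix is to revert to a product chart and give up on $\mu\mapsto\mathsf{Leb}$ at the chart level.
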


We prove the theorem in a number of steps.

\subsection{Definition of new $C^{1+H}$ atlas}\label{sec:atlas}
We let $\Hom$ denote the collection of points of $\T^2$ that are homoclinic to 0 under the action of $L$.
Since $L$ is an automorphism, it follows that if $v\in\Hom$ and $x\in \T^2$ then
$d(L^n(x+v),L^n(x))=d(L^nv,0)\to 0$ as $|n|\to\infty$. Recall that the points 
homoclinic to 0 are dense in $\T^2$ (see e.g. \cite{Mane}).

For the remainder of this section $A_0$ denotes the element of the partition $\cP$ 
which corresponds to the cylinder $[0]$ in $\Omega$, i.e. $\pi([0])=A_0$.

\begin{lem}\label{lem:bracket}
Let $w\in\Hom$ and suppose that $A_0\cap (A_0-w)$ has non-empty interior.
Then there exist vectors $u,v\in\Hom$ such that:
\begin{itemize}
\item $u+v=w$;
\item if $x\in\interior(A_0\cap (A_0-u))$ then the
line segment $[x,x+u]$ lies in $\interior(A_0)$ and is parallel to the stable direction;
\item if $x\in\interior(A_0\cap (A_0-v))$ then the
line segment $[x,x+v]$ lies in $\interior(A_0)$ and is parallel to the unstable direction;
\item $\interior(A_0\cap (A_0-w))=\interior\big(A_0\cap (A_0-u)\cap (A_0-v)\big)$.
\end{itemize}
\end{lem}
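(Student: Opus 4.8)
The plan is to build $u$ and $v$ by decomposing a suitably chosen lift of $w$ into its stable and unstable parts. Since we may assume $\diam(\mathcal P)<\tfrac12$, the rectangle $A_0$ lifts isometrically to a parallelogram $\widetilde{A_0}\subset\R^2$ two of whose sides are parallel to the stable eigendirection $E^s$ of $L$ and two to the unstable eigendirection $E^u$; in eigencoordinates $\widetilde{A_0}=I^s\times I^u$ for closed intervals $I^s\subset E^s$, $I^u\subset E^u$, and $\operatorname{diam}\widetilde{A_0}<\tfrac12$. Using the hypothesis that $A_0\cap(A_0-w)$ has nonempty interior, I would fix $x_0\in\interior\big(A_0\cap(A_0-w)\big)$, let $\tilde x_0\in\interior\widetilde{A_0}$ be its lift and $\tilde y_0\in\interior\widetilde{A_0}$ the lift of $x_0+w$, and set $\bar w:=\tilde y_0-\tilde x_0$, a lift of $w$. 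Writing $\bar w=\bar w^s+\bar w^u$ with $\bar w^s\in E^s$ and $\bar w^u\in E^u$, I would define $u:=\bar w^s\bmod\Z^2$ and $v:=\bar w^u\bmod\Z^2$, so that $u+v=w$ holds by construction and it only remains to verify the four bullets.

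The technical heart is a single observation: $|\bar w^s|,|\bar w^u|<\tfrac12$, and whenever $x,x+u\in A_0$ the lift of $x+u$ lying in $\widetilde{A_0}$ equals $\tilde x+\bar w^s$ (with the symmetric statement for $v$). For the bound, from $\tilde x_0+\bar w\in I^s\times I^u$ one reads off $\tilde x_0^s+\bar w^s\in I^s$ (in eigencoordinates), so $\tilde x_0+\bar w^s\in\widetilde{A_0}$ and hence $|\bar w^s|\le\operatorname{diam}\widetilde{A_0}<\tfrac12$. For the lift-matching, if $x,x+u\in A_0$ then the difference between the lift of $x+u$ in $\widetilde{A_0}$ and the lift $\tilde x$ of $x$ is a lift of $u$ of Euclidean norm $<\tfrac12$, so it coincides with $\bar w^s$ (two lifts of the same point of $\T^2$ at distance less than $1$ must be equal). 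Granting this, bullets two and three are immediate: for $x\in\interior\big(A_0\cap(A_0-u)\big)$ the segment $[x,x+u]$ is the projection of $[\tilde x,\tilde x+\bar w^s]$, which is parallel to $E^s$ and, with both endpoints in the interior of the convex set $\widetilde{A_0}$, lies in $\interior A_0$; symmetrically for $v$, $\bar w^u$ and $E^u$.

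The last bullet is the same eigencoordinate bookkeeping: if $x\in\interior\big(A_0\cap(A_0-w)\big)$ then $\tilde x+\bar w\in\widetilde{A_0}$, and since adding $\bar w^s$ (resp.\ $\bar w^u$) only alters the $E^s$ (resp.\ $E^u$) coordinate, which stays in $I^s$ (resp.\ $I^u$), one gets $\tilde x+\bar w^s,\tilde x+\bar w^u\in\widetilde{A_0}$ and hence $x+u,x+v\in A_0$; conversely, if $x\in\interior\big(A_0\cap(A_0-u)\cap(A_0-v)\big)$ then by the observation the lifts of $x+u,x+v$ are $\tilde x+\bar w^s,\tilde x+\bar w^u$, so $\tilde x^s+\bar w^s\in I^s$ and $\tilde x^u+\bar w^u\in I^u$, whence $\tilde x+\bar w\in\widetilde{A_0}$ and $x+w\in A_0$. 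As both implications hold on open sets, the interiors coincide. Finally I would check $u,v\in\Hom$, the one genuinely dynamical step: since $\bar w^s\in E^s$, $L^n\bar w^s\to0$ as $n\to+\infty$, so $L^nu\to0$; and for $n\to-\infty$, $L^n\bar w^s=L^n\bar w-L^n\bar w^u$ with $L^n\bar w^u\to0$ (as $\bar w^u\in E^u$), while $\operatorname{dist}(L^n\bar w,\Z^2)=d(L^nw,0)\to0$ since $w\in\Hom$, so $\operatorname{dist}(L^n\bar w^s,\Z^2)\to0$ and $L^nu\to0$; hence $u\in\Hom$, and $v\in\Hom$ symmetrically. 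I expect the only point needing care to be the lift-matching in the middle paragraph --- identifying the abstract homoclinic point $u$ with the concrete displacement $\bar w^s$ inside $A_0$ --- which is precisely what $\diam(\mathcal P)<\tfrac12$ buys us; the degenerate cases $\bar w^s=0$ or $\bar w^u=0$ (where $u=0$ or $v=0$) make the corresponding bullets trivial.
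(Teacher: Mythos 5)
Your proof is correct and follows essentially the same route as the paper's: decompose (a short lift of) $w$ into its stable and unstable components determined by the parallelogram geometry of $A_0$, and deduce homoclinicity of each piece from $w\in\Hom$ together with exponential decay along the complementary eigendirection. The paper phrases the last step as ``$u$ lies on the stable manifold of $0$ and the unstable manifold of $w$,'' which is exactly your computation $L^n\bar w^s=L^n\bar w-L^n\bar w^u$; your explicit lift bookkeeping for the segment and interior claims is more detailed than what the paper records, but it is not a different argument.
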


\begin{proof}
For any $x\in \interior(A_0\cap (A_0-w))$, since $A_0$ is a parallelogram
with edges parallel to the stable and unstable directions, the vector $w$ may
be expressed as a sum of pieces $u$ and $v$ parallel to the stable and unstable
directions, where $[x,x+u]$ and $[x,x+v]$ lie in $A_0$. Note that $x+u$ is the point of 
intersection of the stable manifold of $x$ and the unstable manifold of $x+w$. 
Linearity of $L$ implies that $u$ belongs to the stable manifold of 0 and unstable 
manifold of $w$. Since $w\in\Hom$, we conclude that $u\in\Hom$ as well. 
Similarly, $x+v$ is the point of intersection of the unstable 
manifold of $x$ and the stable manifold of $x+w$, so $v\in\Hom$.
\end{proof}


We define two functions $\xi_1$ and $\xi_2$ on $A_0$. Let $\xi_1(x)$ be the $\mu$-measure
of the rectangle contained in $A_0$
lying to the left of the connected portion of the stable manifold of $x$ within $A_0$ as illustrated in
Figure \ref{fig:xi1}. Similarly, let $\xi_2(x)$ be the $\mu$-measure of the rectangle 
contained in $A_0$ lying below the
connected portion of the unstable manifold of $x$ within $A_0$. We denote $\xi(x)=(\xi_1(x),\xi_2(x))$.

\begin{figure}[ht]
\begin{tikzpicture}[fill opacity=.7]
  \begin{axis} [clip=false,
  x=0.5cm,y=0.5cm,
  axis y line=left,
  axis x line=middle,
  ytick=\empty,
  xtick=\empty,
  ymin=-0.0, ymax=8.0,
  xmin=-0.5,xmax=8.0,
  axis line style={draw=none}
  ] \node[anchor=east] (source) at (axis cs:0,8.0){\small{$\T^2$}};
    \node[anchor=east] (source) at (axis cs:4.0,6.5){\small{$A_0$}};
    \draw[very thick](axis cs:0,0) -- (axis cs:8,0) -- (axis cs:8,8) -- (axis cs:0,8) -- (axis cs:0,0);
    \draw[thick](axis cs:2,1.236) -- (axis cs:7.5,0.618*7.5) -- (axis cs:5.8,2.236*7.5-1.618*5.8) -- (axis cs:0.3,4.472-1.618*0.3) -- (axis cs:2,1.236);
    \draw[thick] (axis cs:3.5,0.618*3.5) -- (axis cs:1.8,2.236*3.5-1.618*1.8);
    \draw[thick,pattern=horizontal lines](axis cs:2,1.236) -- (axis cs:3.5,0.618*3.5) -- (axis cs:1.8,2.236*3.5-1.618*1.8) -- (axis cs:0.3,4.472-1.618*0.3) -- (axis cs:2,1.236);
     cs:1.8,2.236*3.5-1.618*1.8) -- (axis cs:0.3,4.472-1.618*0.3) -- (axis cs:2,1.236);
     \draw[thick] (axis cs:0.7,-0.618*1.5+4.2664) -- (axis cs:6.2,0.618*4.0+4.2664);
     \draw[thick,pattern=vertical lines](axis cs:2,1.236) -- (axis cs:7.5,0.618*7.5) -- (axis cs:6.2,0.618*4.0+4.2664) -- (axis cs:0.7,-0.618*1.5+4.2664) -- (axis cs:2,1.236);
    \filldraw (axis cs:2.2,2.236*3.5-1.618*2.2) circle (2pt);
    \node[anchor=south] (source) at (axis cs:2.2,2.236*3.5-1.618*2.2){\small{$x$}};
  \end{axis}
\end{tikzpicture}
\caption{ $\xi_1(x)$ is the measure of the region shaded with horizontal  lines; $\xi_2(x)$ is the measure of the region shaded with vertical lines.}\label{fig:xi1}
\label{pic:xi_def}
\end{figure}
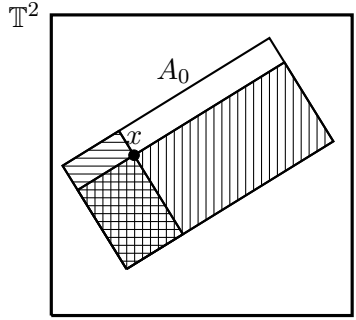


We introduce a new family of charts on $\T^2$.  For $v\in\Hom$, let $\tau_v$ denote the translation
$\tau_v(x)=x+v$. We then define a chart $\alpha_v$ with domain 
$\interior(A_0)-v$ by $\alpha_v=\xi\circ\tau_v$.
Since $\Hom$ is dense in $\T^2$, the collection
of charts covers all of $\T^2$. Our goal for the reminder of this subsection 
is to show that the family of charts $\{(\alpha_v,\interior(A_0)-v)\}_{v\in\Hom}$ 
forms a $C^{1+H}$-differentiable atlas on $\T^2$. We first prove a key lemma.

Let $v\in\Hom$ be such that $A_0\cap(A_0-v)$ has non-empty interior
and such that for any $x\in \interior(A_0\cap(A_0-v))$, the line segment 
joining $x$ and $x+v$ lies in $\interior(A_0)$ and is parallel to the unstable
direction. 
Using the notation from Section \ref{sec:coding} we consider the function 
$\xi_1(\pi(\omega))$ defined on 
$\pi^{-1}\big(\interior(A_0\cap (A_0-v))\big)\subset [0]$ in $ \Omega$ and study the limit
\begin{equation}\label{eq:def_ell}
    \ell(\omega):=\lim_{\omega'\to\omega}
\frac{\xi_1(\pi(\omega')+v)-\xi_1(\pi(\omega)+v)}{\xi_1(\pi(\omega'))-\xi_1(\pi(\omega))}.
\end{equation}
Here the limit is taken over those $\omega'$ such that $\xi_1(\pi(\omega'))\ne \xi_1(\pi(\omega))$, that
is those $\omega'$ such that $\pi(\omega')$ does not lie in the same local stable manifold as $\pi(\omega)$.
This is illustrated in Figure \ref{fig:quotient}.

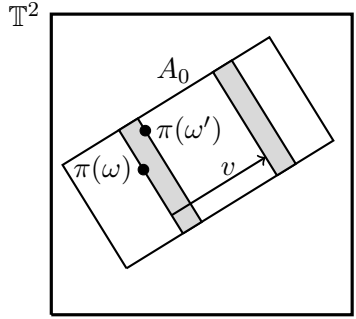
\begin{figure}[ht]
\begin{tikzpicture}
  \begin{axis} [clip=false,
  x=0.5cm,y=0.5cm,
  axis y line=left,
  axis x line=middle,
  ytick=\empty,
  xtick=\empty,
  ymin=0.0, ymax=8.0,
  xmin=-0.5,xmax=8.0,
  axis line style={draw=none}
  ] \node[anchor=east] (source) at (axis cs:0,8.0){\small{$\T^2$}};
    \node[anchor=east] (source) at (axis cs:4.0,6.5){\small{$A_0$}};
    \draw[very thick](axis cs:0,0) -- (axis cs:8,0) -- (axis cs:8,8) -- (axis cs:0,8) -- (axis cs:0,0);
    \draw[thick](axis cs:2,1.236) -- (axis cs:7.5,0.618*7.5) -- (axis cs:5.8,2.236*7.5-1.618*5.8) -- (axis cs:0.3,4.472-1.618*0.3) -- (axis cs:2,1.236);
    \draw[thick,fill=gray!30](axis cs:3.5,0.618*3.5) -- (axis cs:4,0.618*4) -- (axis cs:2.3,2.236*4-1.618*2.3) -- (axis cs:1.8,2.236*3.5-1.618*1.8) -- (axis cs:3.5,0.618*3.5);
      \filldraw (axis cs:2.45,2.236*3.5-1.618*2.45) circle (2pt);
    \node[anchor=east] (source) at (axis cs:2.45,2.236*3.5-1.618*2.45){\small{$\pi(\omega)$}};
     \filldraw (axis cs:2.5,2.236*4-1.618*2.5) circle (2pt);
   \node[anchor=west] (source) at (axis cs:2.5,2.236*4-1.618*2.5){\small{$\pi(\omega')$}};
    \draw[thick,fill=gray!30](axis cs:6,0.618*6) -- (axis cs:6.5,0.618*6.5) -- (axis cs:4.8,2.236*6.5-1.618*4.8) -- (axis cs:4.3,2.236*6-1.618*4.3) -- (axis cs:6,0.618*6);
    \draw[thick,->](axis cs:3.2,2.236*3.5-1.618*3.2) -- (axis cs:3.2+2.5,2.236*3.5-1.618*3.2+0.618*2.5);
   \node[anchor=east] (source) at (axis cs:3.2+2,2.236*3.5-1.618*3.2+0.618*2){\small{$v$}};
    
  \end{axis}
\end{tikzpicture}
\caption{The numerator and denominator in the limit are respectively 
the measures of the right and left shaded rectangles.}
\label{fig:quotient}
\end{figure}

\begin{lem}\label{lem:ell-properties}
Let $v\in\Hom$ be as described above. Then the limit $\ell(\omega)$, defined above,
exists for all $\omega$ in $\pi^{-1}\big(\interior(A_0\cap (A_0-v))\big)$ and
the function $\ell(\omega)$ is torus-H\"older on its domain. 
\end{lem}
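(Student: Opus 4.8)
The plan is to turn the two‑dimensional difference quotient in \eqref{eq:def_ell} into a genuine one‑dimensional Radon--Nikodym derivative on the one‑sided shift, to compute that derivative by an explicit integral formula, and then to deduce Hölder regularity from the formula together with Lemmas~\ref{lem:rnderivHolder} and~\ref{lem:Ruelle}.

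First I would rewrite $\xi_1$ in terms of the coding. Two points of $A_0$ lie on the same connected stable leaf of $A_0$ exactly when the corresponding points of $[0]$ share their future $\omega^+:=p_+(\omega)$, so the stable leaves of $A_0$ are linearly ordered along the unstable direction, and this order is carried by a total order on $p_+([0])$. Using the product structure $d\nu=\varrho\,d\nu^+\,d\nu^-$ of Lemma~\ref{lem:Ruelle} and the normalization \eqref{eq:marginal}, a Fubini computation gives $\xi_1(\pi(\omega))=\Phi(\omega^+)$, where $\Phi(\eta^+)$ is the $\nu^+$-measure of the set of stable leaves lying to the left of $\eta^+$. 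Since $v$ is parallel to the unstable direction, $\tau_v\colon x\mapsto x+v$ maps stable leaves of $A_0$ to stable leaves of $A_0$ and preserves their order; it therefore induces a map $\beta$ on futures with $\beta(\omega^+)=p_+\bigl(\pi^{-1}(\pi(\omega)+v)\bigr)$ (the right‑hand side depends only on $\omega^+$, since all points of one stable leaf in $A_0$ have the same future), well defined on $p_+\bigl(\pi^{-1}(\interior(A_0\cap(A_0-v)))\bigr)$ because $\pi(\omega)+v\in\interior(A_0)$ there. Hence $\xi_1(\pi(\omega)+v)=\Phi(\beta(\omega^+))$, the side condition in \eqref{eq:def_ell} says $\pi(\omega')$ lies on a stable leaf distinct from that of $\pi(\omega)$, and
\begin{equation*}
\ell(\omega)=\lim_{{\omega'}^+\to\omega^+}\frac{\Phi(\beta({\omega'}^+))-\Phi(\beta(\omega^+))}{\Phi({\omega'}^+)-\Phi(\omega^+)} ,
\end{equation*}
which depends on $\omega$ only through $\omega^+$; being a limit of quotients that depend on $(\omega,\omega')$ only through $(\pi(\omega),\pi(\omega'))$, it also depends on $\omega$ only through $\pi(\omega)$.

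Next I would identify this limit. Let $I=I(\omega,\omega')$ be the interval of stable leaves between $\omega^+$ and ${\omega'}^+$ and $R_I\subset A_0$ the corresponding strip; a short check using the standing assumption on $\mathcal P$ (so that $A_0\cap(A_0-v)$ is a sub‑rectangle of $A_0$ of the same stable extent) shows that for $\omega'$ close to $\omega$ one has $R_I\subset\interior(A_0\cap(A_0-v))$, that $\tau_v R_I$ is exactly the strip of $A_0$ relevant to $\xi_1(\pi(\cdot)+v)$, and that numerator and denominator carry the same sign, so the quotient equals $\mu(\tau_v R_I)/\mu(R_I)$. Applying Lemma~\ref{lem:rnderivHolder} with $\tau(x)=x-v$, so that $\tau_*\mu=e^{\theta}\mu$ with $\theta(x)=\sum_{n\in\Z}\bigl[\phi(L^n(x+v))-\phi(L^nx)\bigr]$ Hölder and $\tau_v=\tau^{-1}$, gives $\mu(\tau_v R_I)=(\tau_*\mu)(R_I)=\int_{R_I}e^{\theta}\,d\mu$. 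Disintegrating $\mu$ on $A_0$ over stable leaves via Lemma~\ref{lem:Ruelle} and \eqref{eq:marginal}, the quotient becomes $\nu^+(I)^{-1}\int_I F\,d\nu^+$ with
\begin{equation*}
F(\eta^+)=\int_{A^-}e^{\theta(\pi(\eta^-\eta^+))}\,\varrho(\eta^-\eta^+)\,d\nu^-(\eta^-) .
\end{equation*}
As $\omega'\to\omega$ the interval $I$ shrinks to $\{\omega^+\}$ and has positive $\nu^+$-measure (full support of $\mu$), so continuity of $F$ forces $\nu^+(I)^{-1}\int_I F\,d\nu^+\to F(\omega^+)$; hence the limit in \eqref{eq:def_ell} exists for every $\omega$ in the domain and $\ell(\omega)=F(\omega^+)=:h(\pi(\omega))$.

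Finally, for regularity: the factor $e^{\theta(\pi(\eta^-\eta^+))}\varrho(\eta^-\eta^+)$ in the integrand of $F$ is bounded and, as $\theta$, $\pi$ and $\varrho$ are Hölder, it is Hölder in $\eta^+$ uniformly in $\eta^-$ (if $\eta^+,{\eta'}^+$ agree in their first $n$ coordinates then $\eta^-\eta^+$ and $\eta^-{\eta'}^+$ are $2^{-n}$-close for every $\eta^-\in A^-$), so integrating against the probability measure $\nu^-$ shows $F$, hence $\ell=F\circ p_+$, is Hölder on the domain. Since $\ell$ is moreover constant on $\pi$-fibres, the lemma at the end of Section~\ref{sec:coding} (applied on a rectangular exhaustion of $\interior(A_0\cap(A_0-v))$, a sub‑rectangle of $A_0$) produces a Hölder $h$, which extends to $\T^2$, with $\ell=h\circ\pi$; that is, $\ell$ is torus‑Hölder on its domain. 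I expect the middle step — passing from the formal quotient \eqref{eq:def_ell} to an honest, everywhere‑defined Radon--Nikodym derivative — to be the hard part: it needs the precise symbolic dictionary (stable leaves $\leftrightarrow$ futures, ``left of'' $\leftrightarrow$ the order, $\tau_v\leftrightarrow\beta$) and the geometric bookkeeping that makes $\tau_v R_I$ the correct strip, and it uses the Hölder (hence continuity) conclusions of Lemmas~\ref{lem:rnderivHolder} and~\ref{lem:Ruelle} to upgrade almost‑everywhere absolute continuity to pointwise convergence; once $\ell(\omega)=F(\omega^+)$ is in hand the regularity is routine.
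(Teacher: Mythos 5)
Your first half is essentially the paper's argument and is correct: identifying the difference quotient with $\mu(R_I+v)/\mu(R_I)$, inserting the H\"older Radon--Nikodym derivative $\theta_v$ from Lemma~\ref{lem:rnderivHolder}, disintegrating via the local product structure of Lemma~\ref{lem:Ruelle} together with \eqref{eq:marginal}, and passing to the limit to get $\ell(\omega)=\int_{A^-}\varrho(\zeta^-\omega^+)\,\theta_v(\pi(\zeta^-\omega^+))\,d\nu^-(\zeta^-)$ is exactly what the paper does, and your deduction that this is H\"older \emph{as a function on the shift} is sound.

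The gap is in the torus-H\"older step. You dismiss it with the remark that the quotient ``depends on $(\omega,\omega')$ only through $(\pi(\omega),\pi(\omega'))$, hence the limit depends only on $\pi(\omega)$,'' and you then invoke the factorization lemma of Section~\ref{sec:coding}. But that lemma takes $f(\omega)=f(\omega')$ whenever $\pi(\omega)=\pi(\omega')$ as a \emph{hypothesis}, and your justification of that hypothesis does not work: the limit in \eqref{eq:def_ell} is taken as $\omega'\to\omega$ in the \emph{shift} metric. If $\pi(\omega)$ lies on the stable boundary of an element of some refinement $\bigvee_{|j|\le n}L^{-j}\mathcal P$ (a dense set of points), then the two codings $\omega^-\omega^+$ and $\omega^-\eta^+$ of that point have shift-neighbourhoods that project to opposite sides of that stable leaf, so $\ell(\omega^-\omega^+)$ and $\ell(\omega^-\eta^+)$ are one-sided limits from different sides and are not a priori equal. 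Concretely, your formula gives $\ell(\omega^-\omega^+)=\int_{A^-}\varrho(\zeta^-\omega^+)\theta_v(\pi(\zeta^-\omega^+))\,d\nu^-$ versus $\int_{A^-}\varrho(\zeta^-\eta^+)\theta_v(\pi(\zeta^-\eta^+))\,d\nu^-$; while $\theta_v\circ\pi$ takes the same value on both codings, the density $\varrho$ is only shift-H\"older, not torus-continuous, so $\varrho(\zeta^-\omega^+)\ne\varrho(\zeta^-\eta^+)$ in general. The paper closes this gap with a separate argument: it re-expresses $\ell$ by translating the rectangles by an auxiliary homoclinic vector $\tilde v$ chosen so that $\pi(\omega)+\tilde v$ avoids the stable boundaries of $\bigvee_{0\le j<n}L^{-j}\mathcal P$, which forces the two shifted codings $\tilde\omega^+$ and $\tilde\eta^+$ to agree on $n$ symbols, makes the ratio of the two expressions exponentially close to $1$, and yields equality on letting $n\to\infty$. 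Some such argument is needed; without it the torus-H\"older conclusion is not established.
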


\begin{proof}

Letting $R[\omega,\omega']$ be the rectangle bounded on the top and bottom by the boundary of $A_0$
and the left and right by the stable manifolds through $\pi(\omega)$ and $\pi(\omega')$, we see that
\begin{equation}\label{eq:lim_of_rect}
  \frac{\xi_1(\pi(\omega')+v)-\xi_1(\pi(\omega)+v)}{\xi_1(\pi(\omega'))-\xi_1(\pi(\omega))}
  =\frac{\mu(R[\omega,\omega']+v)}{\mu(R[\omega,\omega'])}
\end{equation}

We now apply the discussion of Section \ref{sec:R-N_deriv} to the case when $T$ 
is the toral automorphism $L$. For any $v\in \T^2$ homoclinic to 0 under $L$ 
the map $x\mapsto x+v$ is a (global) conjugating homeomorphism 
of $\T^2$.
It follows from Lemma  \ref{lem:rnderivHolder} that for an equilibrium state 
$\mu$ of a H\"{o}lder potential $\phi$ we have 
$$
\frac{d\mu(x+v)}{d\mu(x)}=\theta_v(x),
$$
where
\begin{equation}\label{eq:Radon-Nikodym}
    \theta_v(x)=\exp \left(\sum_{n\in\Z}\big[\phi(L^n(x+v))-\phi(L^n(x))\big]\right).
\end{equation}
Recall that by Lemma \ref{lem:rnderivHolder} the function $\theta_v\colon\T^2\to\R$ is H\"older continuous.

We can now rewrite (\ref{eq:def_ell}) as
\begin{align*}
\ell(\omega)&=\lim_{\omega'\to\omega}\frac{\mu(R[\omega,\omega']+v)}{\mu(R[\omega,\omega'])}\\
&=\lim_{\omega'\to\omega}\frac{\int_{R[\omega,\omega']}\theta_v(x)\,d\mu(x)}
{\int_{R[\omega,\omega']}1\,d\mu(x)}.
\end{align*}

We observe that $\pi^{-1}R[\omega,\omega']$ is a subset of $\Omega$ consisting of points $\zeta$
such that $\zeta_0^\infty$ are the non-negative coordinates of points lying between $\pi(\omega)$ and
$\pi(\omega')$. There is no restriction on the negative coordinates other than that $\zeta\in\Omega$ and
$\zeta_0=0$. Write $A^+[\omega,\omega']$ for $\{\zeta^+\in\Omega^+\colon \zeta^+\text{ are the non-negative
coordinates of a point in $R[\omega,\omega']$}\}$ and $A^-$ for 
$\{\zeta^-\in\Omega^-\colon\text{ $\zeta^-_{-1}0$
is legal in $\Omega$}\}$. We now apply Lemma \ref{lem:Ruelle}, giving
\begin{equation}
\ell(\omega)=\lim_{\omega'\to\omega}\frac
{\int_{A^-}\int_{A^+[\omega,\omega']}\varrho(\zeta)\theta_v(\pi(\zeta))\,d\nu^+(\zeta^+)\,d\nu^-(\zeta^-)}
{\int_{A^-}\int_{A^+[\omega,\omega']}\varrho(\zeta)\,d\nu^+(\zeta^+)\,d\nu^-(\zeta^-)}.
\label{eq:approxlom}
\end{equation}
Since $\varrho$ and $\theta_v\circ\pi$ are continuous, the integrands in the numerator and denominator
may be approximated for $\omega'$ close to $\omega$ by 
$\varrho(\zeta^-\omega^+)\theta_v(\pi(\zeta^-\omega^+))$
and $\varrho(\zeta^-\omega^+)$ respectively. Since these
new integrands 
don't depend on $\zeta^+$, the inner integrals of the
approximation to \eqref{eq:approxlom} are just the
product of the integrand and $\nu^+(A^+[\omega,\omega'])$. 
Since $\rho$ is strictly positive, cancelling the common factor, we now see that the limit exists, and
\begin{align*}
    \ell(\omega)&=\frac
{\int_{A^-}\varrho(\zeta^-\omega^+)\theta_v(\pi(\zeta^-\omega^+))\,d\nu^-(\zeta^-)}
{\int_{A^-}\varrho(\zeta^-\omega^+)\,d\nu^-(\zeta^-)}\\
&=\int_{A^-}\varrho(\zeta^-\omega^+)\theta_v(\pi(\zeta^-\omega^+))\,d\nu^-(\zeta^-),
\end{align*}
where the second equality follows from (\ref{eq:marginal}). 
Further, since $\varrho$ and $\theta_v\circ\pi$ are H\"older continuous functions on $\Omega$, we can see that
$\ell(\omega)$ is a H\"older continuous function of $\omega$ on $[0]$, depending only on the non-negative
coordinates of $\omega$.

In order to show that $\ell(\omega)$ is also torus-continuous, we consider $\omega$ belonging to the stable
manifold of 0 (so that $\pi(\omega)$, which we assumed to lie in
$\interior(A_0)$, lies on the boundary of two elements of $L^{-j}\mathcal P$
for some $j>0$: one on the left and one on the right). In this case, $p_+^{-1}(\pi(\omega))$ consists of
two elements, say $\omega^+$ and $\eta^+$. We will show that $\ell({\omega^-\omega^+})=
\ell({\omega^-\eta^+})$.

It will be convenient to find another
expression for $\ell(\omega)$ in which $\pi(\omega)$ is translated by another homoclinic vector
$\tilde{v}$ (which by Lemma \ref{lem:bracket} we can assume to be parallel to the
unstable direction and to satisfy $[\pi(\omega),\pi(\omega)+\tilde{v}]\subset\interior(A_0)$). 
Let $\tilde{R}[\omega,\omega']=R[\omega,\omega']+\tilde{v}$ and denote by 
$\tilde{A}^+[\omega,\omega']$ the set of future codes of points in the rectangle 
$\tilde R[\omega,\omega']$.

By similar arguments to those above and using the fact that $\theta_{v-\tilde{v}}(x)=\theta_{-\tilde{v}}(x)\theta_v(x-\tilde{v})$ 
which is immediate from the expression of the Radon-Nikodym derivative (\ref{eq:Radon-Nikodym}), we obtain
\begin{align*}
    \ell(\omega)&=
\lim_{\omega'\to\omega}\frac{\mu(\tilde{R}[\omega,\omega']+v-\tilde{v})}
{\mu(\tilde{R}[\omega,\omega']-\tilde{v})}\\
    &=\lim_{\omega'\to\omega}\frac
{\int_{A^-}\int_{\tilde{A}^+[\omega,\omega']}\varrho(\zeta)\theta_{-\tilde{v}}(\pi(\zeta))
\theta_v(\pi(\zeta)-\tilde{v})\,d\nu^+(\zeta^+)\,d\nu^-(\zeta^-)}
{\int_{A^-}\int_{\tilde{A}^+[\omega,\omega']}\varrho(\zeta)\theta_{-\tilde{v}}(\pi(\zeta))
\,d\nu^+(\zeta^+)\,d\nu^-(\zeta^-)},
\end{align*}
As before, taking a limit as $\omega'$ approaches
$\omega$, we see that
\begin{equation}\label{eq:comp1}
\ell(\omega^-\omega^+)=\frac
{\int_{A^-}\varrho(\zeta^-\tilde{\omega}^+)\theta_{-\tilde{v}}(\pi(\zeta^-\tilde{\omega}^+))
\theta_v(\pi(\zeta^-\omega^+))\,d\nu^-(\zeta^-)}
{\int_{A^-}\varrho(\zeta^-\tilde{\omega}^+)\theta_{-\tilde{v}}(\pi(\zeta^-\tilde{\omega}^+))
\,d\nu^-(\zeta^-)},
\end{equation}
where $\tilde{\omega}^+$ is the future coding of $\pi(\omega)+\tilde{v}$ corresponding to $\omega^+$.

Letting $\tilde{\eta}^+$ be the future coding of $\pi(\omega)+\tilde{v}$ corresponding to $\eta^+$ we get
\begin{equation}\label{eq:comp2}
\begin{split}
\ell({\omega^-\eta^+})&=\frac
{\int_{A^-}\varrho(\zeta^-\tilde{\eta}^+)\theta_{-\tilde{v}}(\pi(\zeta^-\tilde{\eta}^+))
\theta_v(\pi(\zeta^-\eta^+))\,d\nu^-(\zeta^-)}
{\int_{A^-}\varrho(\zeta^-\tilde{\eta}^+)\theta_{-v'}(\pi(\zeta^-\tilde{\eta}^+))\,d\nu^-(\zeta^-)}\\
&=\frac
{\int_{A^-}\varrho(\zeta^-\tilde{\eta}^+)\theta_{-\tilde{v}}(\pi(\zeta^-\tilde{\omega}^+))
\theta_v(\pi(\zeta^-\omega^+))\,d\nu^-(\zeta^-)}
{\int_{A^-}\varrho(\zeta^-\tilde{\eta}^+)\theta_{-\tilde{v}}(\pi(\zeta^-\tilde{\omega}^+))\,d\nu^-(\zeta^-)},
\end{split}
\end{equation}
where we used the facts $\pi(\zeta^-\tilde{\eta}^+)=\pi(\zeta^-\tilde{\omega}^+)$ 
and $\pi(\zeta^-\eta^+)=\pi(\zeta^-\omega^+)$.

Comparing \eqref{eq:comp1} and \eqref{eq:comp2}, we see that the only place where they differ is
that in the numerator and denominator, $\varrho(\zeta^-\tilde{\omega}^+)$ 
is replaced by $\varrho(\zeta^-\tilde{\eta}^+)$.
However if $\tilde{v}$ is chosen so that $\pi(\omega)+\tilde{v}$ 
does not lie on the stable boundary of any element
of $\bigvee_{0\le j<n}L^{-j}\mathcal P$, then $\tilde{\eta}^+$ and 
$\tilde{\omega}^+$ agree for at least $n$ symbols.
Since $\varrho$ is H\"older continuous,
$\varrho(\zeta^-\tilde{\eta}^+)/\varrho(\zeta^-\tilde{\omega}^+)$ is uniformly 
exponentially close to 1 as $\zeta^-$ runs over $A^-$.
It follows that $\ell(\omega)=\ell({\omega^-\zeta^+})$, so that $\ell$ is torus-continuous.
\end{proof}

We are now ready to establish that the atlas $\{(\alpha_v,\interior(A_0-v))\colon v\in\Hom\}$
is $C^{1+H}$.
We need to prove that for $v_0,v_1\in\Hom$ with the property that 
$\interior(A_0-v_0)\cap \interior(A_0-v_1)\ne\emptyset$, the 
map $\alpha_{v_1}\circ\alpha_{v_0}^{-1}$ is differentiable 
with H\"older continuous derivative.
In this case, observe $\alpha_{v_1}\circ\alpha_{v_0}^{-1}=
(\xi\circ\tau_{v_1})\circ(\xi\circ\tau_{v_0})^{-1}
=\xi\circ \tau_w\circ\xi^{-1}$, where $w=v_1-v_0\in\Hom$.

Using Lemma \ref{lem:bracket}, we write $w=v+u$, where $v$ is in the
unstable direction and $u$ is in the stable direction. Moreover, if both $x$ and $x+w$ are in $\interior(A_0)$, then the line segment joining $x$ and $x+v$
lies in $\interior(A_0)$, so that $v$ satisfies the conditions of Lemma \ref{lem:ell-properties}. Let $h_1$ be the H\"older continuous function on $\interior(A_0)\cap \interior(A_0-w)$ such that $\ell=h_1\circ\pi$ on their domain.

We now evaluate the derivative of $\xi\circ\tau_{w}\circ\xi^{-1}$ using the function $\ell$.
If $(a,b)$ and $(a,b')$ have the same first coordinate and are in the range
of $\xi\circ\tau_w\circ\xi^{-1}$, then we see from the definition of $\xi$ that $\tau_w\circ\xi^{-1}(a,b)$
and $\tau_w\circ\xi^{-1}(a,b')$ lie on the same stable manifold, so that the first coordinates of
$\xi\circ\tau_{w}\circ\xi^{-1}(a,b)$ and $\xi\circ\tau_w\circ\xi^{-1}(a,b')$ agree.
Similarly the second coordinates of $\xi\circ\tau_{w}\circ\xi^{-1}(a,b)$ and
$\xi\circ\tau_{w}\circ\xi^{-1}(a',b)$ agree, so that $\xi\circ\tau_{w}\circ\xi^{-1}(a,b)$
is of the form $(f_1(a),f_2(b))$. We see from the definition of $\ell$ that for $(a,b)$ in the domain of
$\xi\circ\tau_{w}\circ\xi^{-1}$, $f_1'(a)=h_1(\xi^{-1}(a,b))=h_1(\xi_1^{-1}(a)\cap
\xi_2^{-1}(b))$. Since $h_1$ is constant on local stable manifolds, this can also be written as
$h_1(\xi_1^{-1}(a))$. 

We verify that $f_1'$ is H\"older; an almost identical argument will show that $f_2'$ is H\"older.
Let $e_u$ be the unit unstable direction and $z$ be the
bottom left corner of $A_0$. Using $\iota(t)=\xi_1(z+te_u)$ we can write
$f_1'(a)=h_1(z+\kappa^{-1}(a)e_u)$. To show that $f_1'$ is H\"older, it therefore
suffices to show that $\kappa^{-1}$ is H\"older, which follows from an estimate of the form
$|\kappa(t')-\kappa(t)|\ge c|t-t'|^\beta$. We conclude the proof by establishing an estimate of this form.
Let $t'>t$ and let $n$ be such that $|t-t'|\ge 2\diam(\mathcal P)\lambda^{-n}$ 
(as before, $\lambda$ denotes the
expanding eigenvalue of the matrix defining $L$). Then between
the local stable manifolds through $z+t\,e_u$ and
$z+t'\,e_u$, there is at least one full element of $\bigvee_{j=0}^{n-1}L^{-j}\mathcal P$. 
By the Gibbs inequality,
these elements have measure at least $c'e^{-\delta n}$ for some $c'$ and $\delta$ that are independent
of $t$ and $t'$, so that
$|\kappa(t')-\kappa(t)|\ge c'e^{-\delta n}$. But from the bound on $|t-t'|$, 
we deduce $|\kappa(t')-\kappa(t)|\ge c|t-t'|^\beta$
for some $c$ and $\beta$ as required.

\subsection{Differentiability of $L$ with respect to the new atlas}\label{sec:diff_L}

We proved in Section \ref{sec:atlas} that the family of charts 
$\Xi=\{(\alpha_v, \interior(A_0)-v)\}_{v\in\Hom}$ form a $C^{1+H}$-differentiable 
atlas on $\T^2$. In this section we show that $L:(\T^2,\Xi)\to (\T^2,\Xi)$ is $C^{1+H}$. 

We first consider the case when $A_0\cap L^{-1}A_0$ has non-empty interior.
We claim that it suffices to establish that
$\xi\circ L\circ\xi^{-1}$ is $C^{1+H}$ on $\xi(A_0\cap L^{-1}A_0)$. 
To see this, let $v_0,v_1\in\Hom$ be such that the domain of $\alpha_{v_1}\circ L
\circ\alpha_{v_0}^{-1}$, i.e. $U:=(A_0-v_0)\cap L^{-1}(A_0-v_1)$, has non-empty interior.
Let $(a,b)\in \alpha_{v_0}(U)$ and write $(a,b)=\alpha_{v_0}(x)=\xi(v_0+x)$. Let 
$w\in\Hom$ be such that $x+v_0+w\in \interior(A_0\cap L^{-1}A_0)$. We now
see that on a neighbourhood of $(a,b)$
\begin{align*}
\alpha_{v_1}\circ L\circ \alpha_{v_0}^{-1}&=\xi\circ\tau_{v_1}\circ L\circ\tau_{-v_0}\circ\xi^{-1}\\
&=(\xi\circ\tau_{v_1-Lv_0-Lw}\circ \xi^{-1})\circ (\xi\circ L\circ\xi^{-1})
\circ (\xi\circ\tau_w\circ\xi^{-1}):
\end{align*}
$\xi\circ\tau_w\circ\xi^{-1}(a,b)=\xi(x+v_0+w)$; $\xi\circ L\circ \xi^{-1}(\xi(x+v_0+w))=\xi(Lx+Lv_0+Lw)\in \xi(A_0)$;
$\xi\circ \tau_{v_1-Lv_0-Lw}\circ\xi^{-1}(\xi(Lx+Lv_0+Lw))=\xi(Lx+v_1)=\alpha_{v_1}\circ L\circ \alpha_{v_0}^{-1}(a,b)$. 
Once we establish that $\xi\circ L\circ\xi^{-1}$ is $C^{1+H}$ on $\xi(A_0\cap L^{-1}A_0)$,
it will follow from the results of the
previous section that $\alpha_{v_1}\circ L\circ \alpha_{v_0}^{-1}$ is $C^{1+H}$ on a neighbourhood of $(a,b)$.



A similar argument to that in Section \ref{sec:atlas} 
shows that $\xi\circ L\circ \xi^{-1}(c,d)$ is of the form 
$(f_1(c), f_2(d))$ on its domain. We establish H\"older differentiability of 
$\xi\circ L\circ \xi^{-1}$ following the strategy 
of the previous section: first we show that $f_1'$ is shift-H\"older 
and then we verify that $f_1'$ is torus-continuous.
We compute 
\begin{equation}\label{eq:diff_L}
  f'_1(a)=\lim_{h\to 0}\frac{\xi\circ L\circ\xi^{-1}(a+h,b)-\xi\circ L\circ\xi^{-1}(a,b)}{h}.
\end{equation} From the definition of $\xi$ we see that $h$ is the $\mu$-measure 
of the rectangle in $A_0$ lying between the stable manifolds through $x$ and 
$x'=\xi^{-1}(a+h,b)$. Assuming that $h$ is small enough that $x'$ is also in 
$A_0\cap L^{-1}A_0$, we can write the numerator in the limit (\ref{eq:diff_L}) as the 
$\mu$-measure of the rectangle in $A_0$ lying between the stable manifolds 
through $L(x)$ and $L(x')$. We provide an illustration in Figure \ref{pic:diff_L} below.

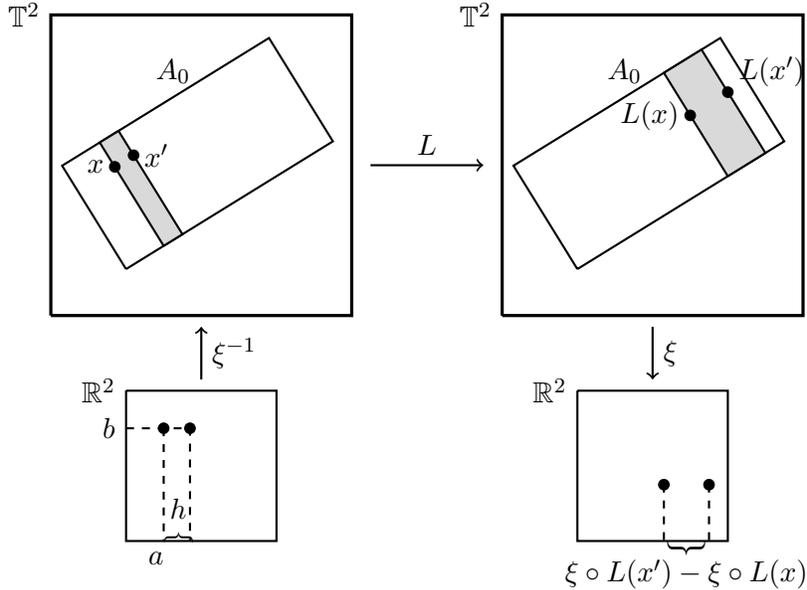
\begin{figure}[ht]
\begin{tikzpicture}
  \begin{axis} [clip=false,
  x=0.5cm,y=0.5cm,
  axis y line=left,
  axis x line=middle,
  ytick=\empty,
  xtick=\empty,
  ymin=-4.0, ymax=8.0,
  xmin=-0.5,xmax=18.0,
  axis line style={draw=none}
  ] \node[anchor=east] (source) at (axis cs:0,8.0){\small{$\T^2$}};
    \node[anchor=east] (source) at (axis cs:4.0,6.5){\small{$A_0$}};
    \draw[very thick](axis cs:0,0) -- (axis cs:8,0) -- (axis cs:8,8) -- (axis cs:0,8) -- (axis cs:0,0);
    \draw[thick](axis cs:2,1.236) -- (axis cs:7.5,0.618*7.5) -- (axis cs:5.8,2.236*7.5-1.618*5.8) -- (axis cs:0.3,4.472-1.618*0.3) -- (axis cs:2,1.236);
    \draw[thick,fill=gray!30](axis cs:3,0.618*3) -- (axis cs:3.5,0.618*3.5) -- (axis cs:1.8,2.236*3.5-1.618*1.8) -- (axis cs:1.3,2.236*3-1.618*1.3) -- (axis cs:3,0.618*3);
    \filldraw (axis cs:1.7,2.236*3-1.618*1.7) circle (2pt);
    \node[anchor=east] (source) at (axis cs:1.7,2.236*3-1.618*1.7){\small{$x$}};
    \filldraw (axis cs:2.2,2.236*3.5-1.618*2.2) circle (2pt);
    \node[anchor=west] (source) at (axis cs:2.2,2.236*3.5-1.618*2.2){\small{$x'$}};
    \draw[->,thick] (axis cs:8.5,4) -- (axis cs:11.5,4);
    \node[anchor=south] (source) at (axis cs:10.0,4.0){\small{$L$}};
    \node[anchor=east] (source) at (axis cs:12,8.0){\small{$\T^2$}};
    \node[anchor=east] (source) at (axis cs:16.0,6.5){\small{$A_0$}};
    \draw[very thick](axis cs:12,0) -- (axis cs:20,0) -- (axis cs:20,8) -- (axis cs:12,8) -- (axis cs:12,0);
    \draw[thick](axis cs:14,1.236) -- (axis cs:19.5,0.618*7.5) -- (axis cs:17.8,2.236*7.5-1.618*5.8) -- (axis cs:12.3,4.472-1.618*0.3) -- (axis cs:14,1.236);
    \draw[thick,fill=gray!30](axis cs:18,0.618*6) -- (axis cs:19,0.618*7) -- (axis cs:17.3,2.236*7-1.618*5.3) -- (axis cs:16.3,2.236*6-1.618*4.3) -- (axis cs:18,0.618*6);
    \filldraw (axis cs:17.0,2.236*6-1.618*5.0) circle (2pt);
    \node[anchor=east] (source) at (axis cs:17.0,2.236*6-1.618*5.0){\small{$L(x)$}};
    \filldraw (axis cs:18,2.236*7-1.618*6) circle (2pt);
    \node[anchor=west] (source) at (axis cs:18,6.5){\small{$L(x')$}};
    \node[anchor=east] (source) at (axis cs:2,-2.0){\small{$\R^2$}};
    \draw[thick](axis cs:2,-2) -- (axis cs:6,-2) -- (axis cs:6,-6) -- (axis cs:2,-6) -- (axis cs:2,-2);
    \draw[->,thick] (axis cs:4,-1.7) -- (axis cs:4,-0.3);
    \node[anchor=west] (source) at (axis cs:4,-1){\small{$\xi^{-1}$}};
    \filldraw (axis cs:3,-3) circle (2pt);
    \filldraw (axis cs:3.7,-3) circle (2pt);
    \node[anchor=east] (source) at (axis cs:2,-3.0){\small{$b$}};
    \node[anchor=north] (source) at (axis cs:2.8,-6){\small{$a$}};
    \node[anchor=south] (source) at (axis cs:3.39,-5.7){\small{$h$}};
    \node[rotate=90] (source) at (axis cs:3.4,-5.9){\small{$\}$}};
    \draw[thick, dashed](axis cs:2,-3.0) -- (axis cs:3.7,-3.0) -- (axis cs:3.7,-6);
    \draw[thick, dashed](axis cs:3,-3.0) -- (axis cs:3,-6);
    \node[anchor=east] (source) at (axis cs:14,-2.0){\small{$\R^2$}};
    \draw[thick](axis cs:14,-2) -- (axis cs:18,-2) -- (axis cs:18,-6) -- (axis cs:14,-6) -- (axis cs:14,-2);
    \draw[<-,thick] (axis cs:16,-1.7) -- (axis cs:16,-0.3);
    \node[anchor=west] (source) at (axis cs:16.0,-1){\small{$\xi$}};
    \filldraw (axis cs:16.3,-4.5) circle (2pt);
    \filldraw (axis cs:17.5,-4.5) circle (2pt);
    \draw[thick, dashed](axis cs:16.3,-4.5) -- (axis cs:16.3,-6);
    \draw[thick, dashed](axis cs:17.5,-4.5) -- (axis cs:17.5,-6);
    \node[rotate=-90] (source) at (axis cs:16.9,-6.2){\large{$\}$}};
     \node[anchor=north] (source) at (axis cs:16.9,-6.2){\small{$\xi\circ L(x')-\xi\circ L(x)$}};
  \end{axis}
\end{tikzpicture}
\caption{The $\mu$-measures of the shaded rectangles on the right and left are the numerator 
and the denominator in the limit (\ref{eq:diff_L}) respectively.}
\label{pic:diff_L}
\end{figure}


The derivative of $f_1$ can be represented symbolically on $[00]\subset\Omega$ as
$$
\ell(\omega)=\lim_{\omega'\to\omega}\frac{\mu(R[\sigma(\omega),\sigma(\omega')])}
{\mu(R[\omega,\omega'])},
$$
where, as before, $R[\omega,\omega']$ and $R[\sigma(\omega),\sigma(\omega')]$ 
are the rectangles bounded on the top and bottom
by the boundary of $A_0$ and on the sides by the stable manifolds through
$\pi(\omega)$, $\pi(\omega')$ and $L(\pi(\omega))$, $L(\pi(\omega'))$ respectively. 
Again, we observe that $\pi^{-1}(R[\omega,\omega'])=A^-\times A^+[\omega,\omega']$ 
and $\pi^{-1}(R[\sigma(\omega),\sigma(\omega')])=A^-\times A^+[\omega,\omega']$ 
where $A^-$, $A^+[\omega,\omega']$ are defined as in Section \ref{sec:atlas}. 
On the other hand, $\pi^{-1}R[\sigma(\omega),\sigma(\omega')]$ is a subset of 
$\Omega$ consisting of points $\zeta$ such that $\zeta_0^\infty$ are the non-negative 
coordinates of points in $L(R[\omega,\omega'])$ and there are no additional 
restrictions on the negative coordinates. Using Lemma \ref{lem:Ruelle} we obtain
$$
\ell(\omega)=\lim_{\omega'\to\omega}
\frac{\nu(A^-\times\sigma_+( A^+[\omega,\omega']))}
{\nu (A^-\times A^+[\omega,\omega'])}
=\lim_{\omega'\to\omega}\frac{{\nu}^+(\sigma_+(A^+[\omega,\omega']))}{{\nu}^+(A^+[\omega,\omega'])}.
$$
Since $\diam(A^+[\omega,\omega'])\to 0$ as $\omega'\to\omega$ and 
$\omega\in A^+[\omega,\omega']$, we conclude that $\ell(\omega)=\frac{1}{g(p_+(\omega))}$, 
where $g$ is the $g$-function for measure $\nu^+$. Since $g$ is strictly positive 
and H\"older on $\Omega^+$, $\ell$ is H\"older on $\Omega$.

To prove that $\ell$ is torus continuous suppose that $x=\pi(\omega)$ lies on the stable 
manifold boundary of two elements of the partition $L^{-j}\cP$ for some $j\in\N$. 
Let $\omega^-\omega^+$ and $\omega^-\eta^+$ be two different symbolic representations 
of $x$. To show that $\ell(\omega^-\omega^+)=\ell(\omega^-\eta^+)$ we apply the 
same steps as in Section \ref{sec:atlas}. For any $N\in \N$, let $v\in\Hom$ be parallel 
to the unstable direction satisfying $x+v\in \interior(A_0\cap L^{-1}A_0)$ and 
$x+v\notin \bigcup_{|k|<N} \partial L^k\cP$. 
Let $\tilde{R}[\omega,\omega']=R[\omega,\omega']+v$ and denote by 
$\tilde{A}^+[\omega,\omega']$ the set of future coordinates of points in 
$\tilde{R}[\omega,\omega']$. Using the expression for the Radon-Nikodym 
derivative (\ref{eq:Radon-Nikodym}) we obtain
$$
\mu(R[\omega,\omega'])=\int_{A^-}\int_{\tilde{A}^+[\omega,\omega']}
\varrho(\zeta^-\zeta^+)\theta_{-v}(\pi(\zeta^-\zeta^+))\,d\nu^+(\zeta^+)\,d\nu^-(\zeta^-).
$$
Similarly, let $\tilde{R}[\sigma(\omega),\sigma(\omega')]=R[\sigma(\omega),\sigma(\omega')]+L(v)$ 
and obtain
$$
\mu(R[\sigma(\omega),\sigma(\omega')])=\int_{A^-}\int_{\sigma^+(\tilde{A}^+[\omega,\omega'])}
\varrho(\zeta^-\zeta^+)\theta_{-L(v)}(\pi(\zeta^-\zeta^+))\,d\nu^+\,d\nu^-.
$$
Consider $\omega=\omega^-\omega^+$ and denote be $\tilde{\omega}^+$ the corresponding 
future coding of $\pi(\omega)+v$. By continuity of $\varrho$ and $\theta$ for 
each $\zeta^-$ the inner integral of $\mu(R[\omega,\omega'])$ is approximately 
$\varrho(\zeta^-\tilde{\omega}^+)
\theta_{-v}(\pi(\zeta^-\sigma^+(\tilde{\omega}^+)))
\nu^+(\tilde{A}^+[\omega,\omega'])$ 
and similarly the inner integral of $\mu(R[\sigma(\omega),\sigma(\omega')])$ 
is approximately $\varrho(\zeta^-\tilde{\omega}^+)
\theta_{-L(v)}(\pi(\zeta^-\sigma^+(\tilde{\omega}^+)))
\nu^+(\tilde{A}^+[\sigma(\omega),\sigma(\omega')])$ 
whenever $\zeta^+$ is close enough to $\tilde{\omega}^+$. As $\omega'\to\omega$ the diameter of 
$\tilde{A}^+[\omega,\omega']$ tends to zero while $\tilde{\omega}^+\in 
\tilde{A}^+[\omega,\omega']$, so that 
$\frac{{\nu}^+(\sigma_+(\tilde{A}^+[\omega,\omega']))}{{\nu}^+(\tilde{A}^+[\omega,\omega'])}
\to \frac{1}{g(\tilde{\omega}^+)}$. Therefore,
\begin{equation*}
\ell(\omega^-\omega^+)=\frac{\int_{A^-}\varrho(\zeta^-\sigma^+(\tilde{\omega}^+))
\theta_{-L(v)}(\pi(\zeta^-\sigma^+(\tilde{\omega}^+)))\,d\nu^-(\zeta^-)}
{\int_{A^-}\varrho(\zeta^-\tilde{\omega}^+)\theta_{-v}(\pi(\zeta^-\tilde{\omega}^+))
\,d\nu^-(\zeta^-)}\cdot\frac{1}{g(\tilde{\omega}^+)}
\end{equation*}
Letting $\tilde{\eta}^+$ be the future coding of $\pi(\omega)+v$ corresponding to $\eta^+$ we get
\begin{equation*}
\ell(\omega^-\eta^+)=\frac{\int_{A^-}\varrho(\zeta^-\sigma^+(\tilde{\eta}^+))
\theta_{-L(v)}(\pi(\zeta^-\sigma^+(\tilde{\eta}^+)))\,d\nu^-(\zeta^-)}
{\int_{A^-}\varrho(\zeta^-\tilde{\eta}^+)\theta_{-v}(\pi(\zeta^-\tilde{\eta}^+))\,d\nu^-(\zeta^-)}
\cdot\frac{1}{g(\tilde{\eta}^+)}.
\end{equation*}
Note that since $\omega^-\tilde{\omega}^+$  and $\omega^-\tilde{\eta}^+$ are two symbolic codings of 
a single point $x+v$ in $\interior(\pi([00]))$, $\sigma(\omega^-\tilde{\omega}^+)$ and 
$\sigma(\omega^-\tilde{\eta}^+)$ are two symbolic codings of the point $L(x+v)$ in 
$\interior(\pi([0])$. Hence, $\pi(p_+^{-1}(\sigma_+(\tilde{\omega}^+)))$ and 
$\pi(p_+^{-1}(\sigma_+(\tilde{\eta}^+)))$ are the same local stable manifold inside $\pi([0])$. 
Both points $\pi(\omega^-\sigma_+\tilde{\omega}^+)$ and $\pi(\omega^-\sigma_+\tilde{\eta}^+)$ 
lie on the intersection of this local stable manifold and the local 
unstable manifold $\pi(p_-^{-1}(\omega^-))$ inside $\pi([0])$, so they must coincide.

Repeating the argument at the end of Section \ref{sec:atlas} completes the proof. 
Since $x+v$ is not on the boundary of the partition $\bigvee_{0\le k<N}L^{-k}\cP$, 
$\tilde{\omega}^+$ and $\tilde{\eta}^+$ agree on at least $N$ symbols. Now H\"older 
continuity of $\varrho$ and $g$ implies that the ratio 
$\ell(\omega^-\omega^+)/\ell(\omega^-\eta^+)$ can be made arbitrarily close to one when 
by choosing $N$ sufficiently large, so that $\ell$ is torus continuous.

So far, we have completed the proof that $L$ is $C^{1+H}$ in the new charts
in the case that $A_0\cap L^{-1}A_0$ has non-empty interior. 
An essentially identical argument shows that if $A_0\cap L^{-n}A_0$ has non-empty interior,
then $L^n$ is $C^{1+H}$ in the new charts. (The only modification is that the $g$-function
has to be replaced by $g^{(n)}$ defined by 
$g^{(n)}(x)=g(x)g(\sigma(x))\cdots g(\sigma^{n-1}x)$). 
Since Anosov automorphisms are topologically mixing, $A_0\cap L^{-n}A_0$ has non-empty
interior for all sufficiently large $n$. In particular there is $n$ such
that $L^n$ and $L^{n+1}$ are both $C^{1+H}$ diffeomorphisms. It follows that
$L=(L^n)^{-1}\circ L^{n+1}$ is $C^{1+H}$ as required.




\subsection{Cohomology of $\phi$ and the geometric potential of $L$ in the new atlas.}

\begin{lem}\label{lem:rectarea}
Let $L$ and $\mathcal P$ be as above. There exist $\gamma>0$ and $k>0$ such that
if $R\subset A_0$ is of the form 
$R=\pi(C_-\times S)$ where $C_-$ is an $n$-cylinder in $\Omega_-$ and $S\subset[0]\subset\Omega_+$,
then $\mu(R)\le ke^{-\gamma n}\mu(\pi\circ p_+^{-1}S)$.
\end{lem}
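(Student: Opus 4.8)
The plan is to transfer the statement to the symbolic model $(\Omega,\sigma,\nu)$ and combine Ruelle's local product structure (Lemma~\ref{lem:Ruelle}) with exponential decay of the $\nu^-$-measure of cylinders. Since $\pi$ is one-to-one off a $\nu$-null set and $\mu=\pi_*\nu$, the factor map $\pi$ is a measure isomorphism of $(\Omega,\nu)$ onto $(\T^2,\mu)$, so $\mu(\pi(E))=\nu(E)$ for every measurable $E\subseteq\Omega$. Writing $C_-\times S=\{\omega\in[0]:p_-(\omega)\in C_-,\ p_+(\omega)\in S\}$ (if the word defining $C_-$ is not legally followed by $0$ this set is empty and there is nothing to prove), we get $\mu(R)=\nu(C_-\times S)$, and likewise $\mu(\pi\circ p_+^{-1}S)=\nu(A^-\times S)$, where $A^-$ is the set of legal pasts for the symbol $0$ introduced in Section~\ref{sec:coding}; note that $p_+^{-1}S\subseteq[0]$ because $S\subseteq[0]$, and that $\nu^-(A^-)\ge\nu([0])>0$. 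Applying Lemma~\ref{lem:Ruelle} on $[0]$ we have $d\nu(\omega)=\varrho(\omega)\,d\nu^+(\omega^+)\,d\nu^-(\omega^-)$ with $\varrho$ positive and continuous on the compact set $[0]$, hence $0<m\le\varrho\le M<\infty$; integrating over the product and observing that the same factor $\nu^+(S)$ appears in both estimates,
\[
\nu(C_-\times S)\le M\,\nu^+(S)\,\nu^-(C_-),\qquad \nu(A^-\times S)\ge m\,\nu^+(S)\,\nu^-(A^-),
\]
so that $\mu(R)\le \dfrac{M}{m\,\nu^-(A^-)}\;\nu^-(C_-)\;\mu(\pi\circ p_+^{-1}S)$.

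It remains to bound $\nu^-(C_-)$ for an $n$-cylinder $C_-$ uniformly by $\beta^{\,n}$ for some $\beta<1$. I would use the analogue for $\Omega^-$ of the Walters description recalled above: $\nu^-$ is the $g^-$-measure of a H\"older function $g^-\colon\Omega^-\to(0,1)$, so by compactness $\beta:=\sup g^-<1$, and iterating the defining relation $\mathcal{L}_{g^-}^{\ast}\nu^-=\nu^-$ together with $\mathcal{L}_{g^-}\mathbf 1=\mathbf 1$ expresses $\nu^-(C)$ as the integral over $\Omega^-$ of the $n$-fold Birkhoff product of $g^-$ along the unique branch entering $C$, hence $\nu^-(C)\le\beta^{\,n}$ for every $n$-cylinder $C$. (One could instead invoke the Gibbs property of $\nu^-$; the point is that the relevant cohomologous representative of $\psi$ on $\Omega^-$ is $\log g^-$, which is strictly negative, so the naive Gibbs estimate $\nu^-(C)\asymp e^{S_n\psi-nP}$ with $P=0$ really does decay.) Taking $\gamma=-\log\beta>0$ and $k=M/(m\,\nu^-(A^-))$ finishes the argument.

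The set-theoretic identifications modulo $\nu$-null sets and the bookkeeping of constants are routine. The one substantive ingredient is the exponential decay of $\nu^-$ on cylinders, i.e.\ the positivity of the entropy of $\nu$ (equivalently $g^-<1$); beyond invoking that correctly I do not anticipate a genuine obstacle.
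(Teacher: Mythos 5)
Your argument is essentially the paper's: the authors' (very brief) proof invokes exactly the same two ingredients, namely the local product structure of Lemma \ref{lem:Ruelle} on the cylinder $[0]$ and the fact that $\nu^-$ is a $g$-measure with $g^-$ bounded away from $1$, which is precisely your reduction of $\mu(R)$ to a constant times $\nu^-(C_-)\,\nu^+(S)$ followed by the bound $\nu^-(C_-)\le\beta^{\,n}$. The one caveat, which you share with the paper, is that ``$\sup g^-<1$ by compactness'' presupposes that every point of $\Omega^-$ has at least two $\sigma_-$-preimages (a symbol with a unique legal predecessor forces $g^-=1$ somewhere); in that case the uniform exponential decay of $n$-cylinder $\nu^-$-measures should instead be derived from $\sup_{\mu'}\int\log g^-\,d\mu'<0$, the supremum being over all shift-invariant measures, which holds because a maximizer attaining $0$ would be a zero-entropy equilibrium state of $\log g^-$ and hence equal to $\nu^-$, contradicting $h_{\nu^-}>0$.
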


The proof is an application of the product structure outlined in Section \ref{sec:coding} 
together with the fact that $\nu^-$ is a $g$-measure
with $g_-$ bounded away from 1. 

\begin{lem}\label{lem:expanding}
The map $L$ is expanding in the unstable direction in the new coordinate system:
for any finite sub-atlas there exists $n\in\N$ such that 
for any $x\in \T^2$ and for any charts in the sub-atlas containing $x$ and $L^nx$ 
respectively, $D_uL^n>2$ when computed in the respective charts. 
\end{lem}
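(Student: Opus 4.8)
The plan is to bypass tracking the orbit through intermediate charts and instead compute $D_uL^n$ in an arbitrary pair of charts of a finite sub-atlas \emph{directly} in terms of the measure $\nu^+$ on $\Omega^+$, reducing everything to the $g$-function computation of Section~\ref{sec:diff_L}.

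\textbf{Reduction to a $\nu^+$-ratio.} Fix a finite sub-atlas; we may assume it covers $\T^2$ (otherwise the statement is vacuous at the uncovered points), and let $F\subset\Hom$ be its finite set of translation vectors. Take $x\in\T^2$ and $v_0,v_1\in F$ with $x+v_0\in\interior(A_0)$, $L^nx+v_1\in\interior(A_0)$. On a neighbourhood of $\alpha_{v_0}(x)$ one has $\alpha_{v_1}\circ L^n\circ\alpha_{v_0}^{-1}=\xi\circ\tau_{v^*}\circ L^n\circ\xi^{-1}$ with $v^*:=v_1-L^nv_0\in\Hom$; as in Sections~\ref{sec:atlas}--\ref{sec:diff_L} this map has the product form $(a,b)\mapsto(F_1(a),F_2(b))$, so $D_uL^n=F_1'(a)$. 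For $p,q\in A_0$ write $R[p,q]\subset A_0$ for the rectangle bounded by $\partial A_0$ and by the stable manifolds through $p,q$, and $A^+[p,q]$ for the set of future codes of its points. Since $\pi^{-1}R[p,q]=A^-\times A^+[p,q]$, one has $\mu(R[p,q])=\nu^+(A^+[p,q])$. Letting $\omega,\omega'$ code $x+v_0,x'+v_0$ and $\eta,\eta'$ code $L^nx+v_1,L^nx'+v_1$, the definition of $\xi_1$ gives
$$F_1'(a)=\lim_{x'\to x}\frac{\mu(R[L^nx+v_1,L^nx'+v_1])}{\mu(R[x+v_0,x'+v_0])}=\lim_{x'\to x}\frac{\nu^+(A^+[\eta,\eta'])}{\nu^+(A^+[\omega,\omega'])}.$$

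\textbf{Symbolic comparison and the $g$-function.} The crux is that $A^+[\eta,\eta']$ is, up to a prefix of uniformly bounded length, the $n$-step shift of $A^+[\omega,\omega']$. Indeed $L^nx+v_1=\tau_{v^*}(L^n(x+v_0))$, the future code of $L^n(x+v_0)$ is $\sigma_+^n(p_+\omega)$, and since $d(L^kv^*,0)\le d(L^kv_1,0)+d(L^{n+k}v_0,0)\le 2C_1\lambda^{-k}$ for $k\ge 0$ (with $C_1,\lambda$ as in Lemma~\ref{lem:rnderivHolder} and depending only on $F$), there is $k_0=k_0(F)$, independent of $x$ and $n$, such that the future codes of $L^nx+v_1$ and of $L^n(x+v_0)$ agree past coordinate $k_0$: $\eta^+_j=\omega^+_{n+j}$ for all $j\ge k_0$. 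Hence $A^+[\eta,\eta']$ is obtained from $A^+[\omega,\omega']$ by deleting its first $n$ symbols and prepending the length-$k_0$ word $P=\eta^+_0\cdots\eta^+_{k_0-1}$, which ranges over a finite set. Using the $g$-measure property \eqref{eq:def_g-func} of $\nu^+$ — deleting a front symbol from a shrinking cylinder divides its measure by the relevant value of $g$, and prepending a length-$k_0$ word multiplies it by a factor in $[(\min g)^{k_0},(\max g)^{k_0}]$ — the limit exists and equals
$$F_1'(a)=\frac{C_P(\omega^+)}{g^{(n+k_0)}(p_+\omega)},\qquad C_P(\omega^+)\in\big[(\min g)^{k_0},(\max g)^{k_0}\big].$$

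\textbf{Conclusion.} Since $g\colon\Omega^+\to(0,1)$ is continuous on a compact space, $c:=\max g<1$ and $g^{(n+k_0)}\le c^{\,n+k_0}$, so
$$D_uL^n=F_1'(a)\ \ge\ (\min g)^{k_0}\,c^{-(n+k_0)}=\Big(\tfrac{\min g}{c}\Big)^{k_0}c^{-n},$$
a bound independent of $x$ and of the chosen charts, which exceeds $2$ as soon as $n\ge N(F):=\big\lceil\log\!\big(2(c/\min g)^{k_0}\big)/\log(1/c)\big\rceil$. This proves the lemma.

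\textbf{Main obstacle.} The delicate point is the uniform symbolic comparison in the third paragraph: that the future codes of $x+v_0$ and of $L^nx+v_1$ coincide past a coordinate $k_0$ that can be chosen independently of $x$ and $n$. This works because $v^*=v_1-L^nv_0$ remains within a bounded neighbourhood of the fixed vector $v_1$ (as $v_0$ is homoclinic, $L^nv_0\to 0$), so $\tau_{v^*}$ acts as a conjugating homeomorphism of uniformly bounded defect; one must also deal with the harmless ambiguity when $L^nx+v_1$ lies on a partition boundary, which is handled by one-sided limits since $F_1$ is already known to be $C^1$.
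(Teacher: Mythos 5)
Your proposal takes a genuinely different route from the paper. The paper never passes through the $g$-function here; instead it applies Lemma~\ref{lem:rectarea} to the image rectangle $L^nB_1+v$ (a set whose past is an $n$-cylinder), combines this with uniform bounds on the Radon--Nikodym derivatives $\theta_{u_i}$, and uses the fact that the transition maps $\alpha_{u_i}\circ\alpha_{u_j}^{-1}$ have uniformly bounded derivatives to transfer the estimate between charts. Your approach reduces $D_uL^n$ directly to a one-sided cylinder-measure ratio and invokes the $g$-measure structure, which ties the estimate more explicitly to the value $g^{(n)}\le c^n$; this is a cleaner conceptual link to Section~\ref{sec:diff_L}, and it buys you an explicit growth rate $c^{-n}$ rather than $e^{\gamma n}$ from Lemma~\ref{lem:rectarea}.

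However, there is a real gap in the step you yourself flag as the delicate one. You claim there is a $k_0=k_0(F)$, independent of $x$ and $n$, such that the future codes of $z=L^n(x+v_0)$ and $z+v^*=L^nx+v_1$ agree at all coordinates $j\ge k_0$. The bound $d(L^jv^*,0)\le 2C_1\lambda^{-j}$ gives $d(L^jz,L^j(z+v^*))\le 2C_1\lambda^{-j}$, but closeness of the two orbit points does not force them into the same element of $\mathcal P$: they can lie on opposite sides of a stable-boundary segment at arbitrarily late times $j$, precisely when $L^jz$ stays within $O(\lambda^{-j})$ of $\partial^s\mathcal P$ for all $j$ (for instance, $z$ on the stable manifold of a boundary point). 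This is a nonempty (though measure-zero and nowhere dense) set of $z$, so the statement ``codes agree past a uniform $k_0$'' fails for some $x$. Your remark about ``the harmless ambiguity when $L^nx+v_1$ lies on a partition boundary'' addresses a different (coding-non-uniqueness) issue and does not cover this one. The cleanest repair is the one implicit in your own closing sentence, but made explicit: establish the lower bound $D_uL^n\ge(\min g/c)^{k_0}c^{-n}$ on the dense set of $x$ for which the whole forward orbit of $L^n(x+v_0)$ stays a definite distance from $\partial^s\mathcal P$ (there the uniform $k_0$ really does exist), and then extend to all $x$ by continuity of $D_uL^n$, which you already know from the $C^{1+H}$ regularity of $L$. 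With that added sentence the argument closes; as written the uniformity claim is not justified.
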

\begin{proof}
Let the finite sub-atlas be $\{\alpha_{u_1},\ldots,\alpha_{u_N}\}$.  
Let $M$ and $M'$ be positive constants such that $\tfrac 1M\le \theta_{u_i}(x)\le M$ 
for $1\le i\le N$ and  all of the maps $\alpha_{u_i}\circ\alpha_{u_j}^{-1}$ 
have derivatives between $M'$ and ${M'}^{-1}$
when $1\le i,j\le N$. 
By compactness, there exists $\delta>0$ such that for each $x\in\T^2$, there exists an $i$
with $x+u_i\in A_0$ and $d(x+u_i,\partial A_0)>\delta$. Let $n$ be a fixed integer sufficiently
large that $\lambda^{-n}<\delta$ and also satisfying  $e^{\gamma n}>2kM^2{M'}^2$, where  $\lambda$ is the
expanding eigenvalue of $L$ and $k$, $\gamma$ are as in Lemma \ref{lem:rectarea}.

Let $u,v\in\{u_1,...,u_N\}$ be such that $x+u$ and $L^nx+v$ both lie in $\text{int}_\delta(A_0)$.
Let $B_1+u$ be a rectangle in $A_0$ whose projection in $A_0$ onto the stable direction is
all of the stable manifold segment defining $A_0$ and whose unstable projection in $A_0$ 
is sufficiently narrow that 
$L^nB_1+v\subset A_0$. Let $B_2$ be the rectangle in $A_0$ whose projection onto the stable direction
is the stable manifold segment defining $A_0$ and whose unstable projection is the same as that of
$L^nB_1+v$. Then by Lemma \ref{lem:rectarea},
\begin{equation}\label{eq:prodbound}
  \mu(L^nB_1+v)\le ke^{-\gamma n}\mu(B_2).
\end{equation}

We then have
\begin{equation}\label{eq:RNbounds}
  \begin{split}
     \mu(L^nB_1+v)&\ge \tfrac 1M\mu(L^n B_1)\\
     \mu(L^n B_1)&=\mu(B_1)\\
     \mu(B_1)&\ge \tfrac 1M\mu(B_1+u).
  \end{split}
\end{equation}
Combining equations \eqref{eq:prodbound} and \eqref{eq:RNbounds}, by the choice of $n$
we see 
$$
\mu(B_2)\ge \frac{e^{\gamma n}}{kM^2}\mu(B_1+u)\ge 2M'^2\mu(B_1+u).
$$
    
Shrinking $B_1$ so that $B_1+u$ shrinks to the segment of the stable manifold of $x$ lying in $A_0$, we
deduce the unstable derivative of $L^n$ in the $(\alpha_u,\alpha_v)$ charts is at least $2{M'}^2$. 
Now if $u'$ and $v'$ are such that $\alpha_{u'}$ and $\alpha_{v'}$ are arbitrary charts in the sub-atlas
containing $x$ and $L^nx$ in their domain then the unstable derivative of $L^n$ in the
$(\alpha_{u'},\alpha_{v'})$ charts is at least 2. This completes the proof.    
\end{proof}

\begin{lem}\label{lem:Livsiccond}
    There exists $M>0$ such that
    for any $n\in\N$, any cylinder set $C$ in $\Omega$ of the form $[0a_1\ldots a_{n-1}0]$, 
    and any $\omega\in C$
    we have 
    $$
    \frac 1M \le |D_uL^n(\pi(\omega))|\cdot\exp(S_n\phi(\pi(\omega)))\le M,
    $$
    where the unstable derivative of $L$ is computed using the new charts.
\end{lem}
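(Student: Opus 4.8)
The plan is to identify the unstable derivative of $L^n$ in the new coordinates with the reciprocal of a product of values of the $g$-function of $\nu^+$, and then to use that $\log g$ is cohomologous to $\phi\circ\pi$, so that the Birkhoff sum of $\phi\circ\pi-\log g$ telescopes and is uniformly bounded.

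Fix $\omega\in C=[0a_1\ldots a_{n-1}0]$. Since $\omega_0=\omega_n=0$, both $\pi(\omega)$ and $L^n\pi(\omega)=\pi(\sigma^n\omega)$ lie in $A_0$, and I would compute $|D_uL^n(\pi(\omega))|$ in the chart $\alpha_0=\xi$ at the source and at the target. (For the boundary codings, where $\pi(\omega)$ or $L^n\pi(\omega)$ lies on $\partial A_0$, I would pass to a limit at the end, using that the function $\ell$ appearing below was shown in Section \ref{sec:diff_L} to be torus-continuous and that the inequalities to be proved are closed; equivalently, the value computed in any chart of a fixed finite sub-atlas differs by a uniformly bounded factor.) The argument of Section \ref{sec:diff_L}, which for $L$ gives $|D_uL(\pi(\omega))|=1/g(p_+(\omega))$ on $[00]$ and which the same section notes extends to $L^n$ with $g$ replaced by $g^{(n)}$, yields
\begin{equation}\label{eq:Livsicderiv}
  |D_uL^n(\pi(\omega))|=\frac{1}{g^{(n)}(p_+(\omega))},
\end{equation}
where $g$ is the $g$-function of $\nu^+$ from \eqref{eq:def_g-func}; only $\omega_0=\omega_n=0$ is used here, not the internal word $a_1\ldots a_{n-1}$.

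Regarding $\log g$ also as a function on $\Omega$ via $p_+$, and noting $S_n\phi(\pi(\omega))=S_n(\phi\circ\pi)(\omega)$ and $\log g^{(n)}(p_+(\omega))=S_n(\log g)(\omega)$, \eqref{eq:Livsicderiv} gives
\begin{equation*}
  \log\!\Big(|D_uL^n(\pi(\omega))|\cdot e^{S_n\phi(\pi(\omega))}\Big)=S_n\big(\phi\circ\pi-\log g\big)(\omega).
\end{equation*}
By the theorem of Walters recalled after Lemma \ref{lem:Ruelle} --- this is where the hypothesis $P_{\rm top}(L,\phi)=0$ is used, since $\log g$ automatically has zero pressure --- the function $\log g$ is cohomologous to $\phi\circ\pi$ on $\Omega$: there is a H\"older, hence bounded, function $u\colon\Omega\to\R$ with $\phi\circ\pi-\log g=u\circ\sigma-u$. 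Hence $S_n(\phi\circ\pi-\log g)(\omega)=u(\sigma^n\omega)-u(\omega)$, so the quantity displayed above has absolute value at most $2\|u\|_\infty$, and the lemma holds with $M=e^{2\|u\|_\infty}$.

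The main, and essentially the only, point requiring care is the first step: checking that the pointwise derivative identity \eqref{eq:Livsicderiv} from Section \ref{sec:diff_L} holds everywhere on the cylinder $C$, boundary codings included, with the derivative taken in a fixed chart. Once \eqref{eq:Livsicderiv} is available, the lemma is just its combination with the cohomology relation $\log g\sim\phi\circ\pi$, and no quantitative estimate (in particular, neither Lemma \ref{lem:rectarea} nor Lemma \ref{lem:expanding}) is needed for it.
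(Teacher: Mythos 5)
Your argument is correct, but it follows a genuinely different route from the paper's. The paper proves the lemma by the classical bounded-distortion scheme: H\"older continuity of the chart derivatives together with the expansion estimate of Lemma \ref{lem:expanding} gives $|D_uL^n(x)|/|D_uL^n(y)|\le M$ for $x,y$ on the same unstable fibre of the cylinder; the mean value theorem, combined with the fact that unstable lengths in the new charts \emph{are} $\mu$-measures, then yields $\tfrac 1M\le |D_uL^n(\pi(\omega))|\cdot\nu(C)\le M$; and Bowen's Gibbs inequality (with $P_{\rm top}(\sigma,\phi\circ\pi)=0$) converts $\nu(C)$ into $\exp(S_n\phi(\pi(\omega)))$. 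You instead use the exact pointwise identity $|D_uL^n|=1/(g^{(n)}\circ p_+)$ --- which Section \ref{sec:diff_L} does establish, including the parenthetical extension from $L$ to $L^n$ --- and then telescope $S_n(\phi\circ\pi-\log g)$ via Walters' cohomology. Both are sound. Your version is shorter, gives an identity rather than a two-sided estimate, and bypasses Lemma \ref{lem:expanding} and the distortion computation entirely; the cost is that it leans on the $L^n$ derivative formula (stated only in passing in the paper) and on the precise form of Walters' theorem, in particular that the cohomology $\phi\circ\pi\sim\log g$ carries no additive constant --- which is exactly where $P_{\rm top}(L,\phi)=0$ enters, and which you flag correctly. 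Your treatment of boundary codings and of the chart convention (computing in $\xi$ at both endpoints, with a finite sub-atlas absorbing any remaining ambiguity into $M$) is consistent with the paper's implicit choices.
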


\begin{proof}
The proof is based on a standard argument that the fibre maps of uniformly expanding 
maps have bounded distortion (see e.g. \cite[Chapter III]{Mane}). 
Suppose that $x,y$ are points in $A_0$ which lie on the same local unstable manifold 
and are such that $x=\pi(\omega), y=\pi(\eta)$ with 
$\omega,\eta\in [0a_1\ldots a_{n-1}0]\subset \Omega$. Recall from Section 
\ref{sec:diff_L} that in terms of charts of the new atlas, the map $L$ has the form 
$\alpha_{v_1}\circ L\circ \alpha_{v_0}^{-1}(a,b)=(f_1(a), f_2(b))$ where the functions 
$f_1$ and $f_2$, which depend on the choice of $v_0$ and $v_1$, are differentiable 
with H\"older continuous derivatives. Since $\sigma^n(\omega),\sigma^n(\eta)\in [0]$ we 
have that both $L^n(x)$ and $L^n(y)$ are in $A_0$ and hence 
$d(L^j(x),L^j(y))\le \lambda^{-(n-j)}$ for $0\le j<n$, where $\lambda$ is the 
expanding eigenvalue of $L$ (and here the distance is computed using the original
metric). Denote by $f_{1,j}$ the first component of $L$ 
computed in the charts corresponding to $L^j x$ and $L^{j+1}x$. 
Applying the chain rule we see that 
$$
\left|\frac{D_uL^n(x)}{D_uL^n(y)}\right|=\prod_{j=0}^{n-1}
\left|\frac{f'_{1,j}(L^jx)}{f'_{1,j}(L^jy)}\right|.
$$
It follows from H\"older continuity of the derivatives and Lemma \ref{lem:expanding} that there are 
$K>0$ and $\gamma\in(0,1)$ such that for $0\le j<n$
$$
\left|\frac{f_{1,j}(L^j(x))}{f_{1,j}(L^j(y))}\right|
\le 1+Kd(L^j(x),L^j(y))^\gamma\le 1+K\lambda^{-(n-j)}.
$$
Setting $M=\prod_{j=1}^\infty(1+K\lambda^{-j})$ we obtain that $|D_uL^n(x)|/|D_uL^n(y)|\le M$ 
for all $x,y$ lying in a segment of the local unstable manifold contained in a single partition element.

Now suppose $C$ is a cylinder set 
$[a_0a_1\ldots a_n]$ in $\Omega$ with $a_0=a_n=0$.
Let $\omega^-$ be a compatible past and set $U=\pi(\omega^-C)$, a piece of unstable manifold
that is mapped bijectively by $L^n$ onto a fibre of the unstable manifold crossing the 
partition element $A_0$. 
By the mean value theorem, the length (in the new charts) of $L^n U$ (which is the same as the width of the 
0 partition element) is the product of the length of $U$ and the unstable derivative 
at some point $u\in U$. Since coordinates (and hence lengths) in the unstable direction are computed
using by $\mu$-measures the measure $\mu=\pi_\ast \nu$, this gives, for any $\omega\in U$,
$$
\frac 1M\le |(D_uL^n)(\pi(\omega))|\cdot \nu(C)\le M.
$$
Now applying the Bowen definition \cite{Bowen} for the Gibbs state $\nu$ of the potential 
$\phi\circ\pi$  together with the fact that $P_{\rm top}(\sigma,\phi\circ\pi)=0$,
$$
\frac {1}{M'} \exp(S_n\phi(\pi(\omega)))\le \mu(C)\le M'\exp(S_n\phi(\pi(\omega)))
$$
Substituting in the previous inequality gives the required statement. 
\end{proof}

\begin{lem}
Let $\phi$ be as in the statement of Theorem \ref{thm:main}, and let 
the charts be constructed as above. Then potential $\phi(x)$ is cohomologous to 
$-\log |D_uL(x)|$, where the unstable derivative is computed using the 
new charts. 
\end{lem}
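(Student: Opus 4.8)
The aim is to exhibit a H\"older function $u\colon\T^2\to\R$ satisfying $\phi+\log|D_uL|=u\circ L-u$, where $|D_uL|$ denotes the unstable derivative of $L$ computed in the new charts; set $\psi:=\phi+\log|D_uL|$. The plan is to read the estimate of Lemma~\ref{lem:Livsiccond} as a Liv\v{s}ic-type condition on $\psi$ and conclude that $\psi$ is a coboundary.

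First I would use that, in the new atlas, $L$ has the triangular form $(a,b)\mapsto(f_1(a),f_2(b))$ with $f_1,f_2$ differentiable (Sections~\ref{sec:atlas} and \ref{sec:diff_L}), so the chain rule gives $\log|D_uL^n(x)|=S_n\big(\log|D_uL|\big)(x)$ for every $x$ and $n$. Substituting this into Lemma~\ref{lem:Livsiccond} and taking logarithms produces a constant $L_0$ such that $|S_n\psi(\pi(\omega))|\le L_0$ whenever $\omega\in[0a_1\cdots a_{n-1}0]$. Nothing in the argument of Lemma~\ref{lem:Livsiccond} singles out the cylinder $[0]$, so the same bound holds with $[0]$ replaced by any $[i]$, $i\in\cA$. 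I would then pass to periodic orbits: if $x\in\T^2$ satisfies $L^px=x$, the finite $\sigma^p$-invariant set $\pi^{-1}(x)$ contains a $\sigma$-periodic point $\omega$, of period $p'$; since $\omega_0=\omega_{p'}$ we may apply the bound along the cylinders $[\omega_0 a_1\cdots a_{kp'-1}\omega_0]$ to obtain $|S_{kp'}\psi(x)|\le L_0$ for all $k\ge1$, and since $L^{p'}x=x$ this reads $|k\,S_{p'}\psi(x)|\le L_0$, forcing $S_{p'}\psi(x)=0$ and hence $S_p\psi(x)=0$. Thus $\psi$ has vanishing Birkhoff sum over every periodic orbit of $L$.

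To finish I would check that $\psi$ is H\"older on $\T^2$ with respect to the original (flat) metric: $\phi$ is H\"older by hypothesis, and $\log|D_uL|$ is H\"older because $D_uL$ was shown in Sections~\ref{sec:atlas} and \ref{sec:diff_L} to be torus-H\"older and is strictly positive on the compact torus. Since $L$ is a transitive Anosov automorphism, the Liv\v{s}ic periodic orbit theorem then applies to $\psi$ and yields a H\"older $u\colon\T^2\to\R$ with $\psi=u\circ L-u$, i.e.\ $\phi+\log|D_uL|=u\circ L-u$; equivalently, $\phi$ is cohomologous to $-\log|D_uL|$.

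The step I expect to be the main obstacle is the regularity bookkeeping behind the last paragraph: one must be sure that the unstable derivative computed in the \emph{new} charts, viewed as a function on $\T^2$, is H\"older for the \emph{original} metric, so that Liv\v{s}ic's theorem applies in its standard form; and one must handle periodic points lying on $\partial\mathcal P$, which never enter the interior of a partition element and lift to the shift non-uniquely. An alternative that avoids Liv\v{s}ic, at the cost of more thermodynamic-formalism input: in the new atlas $L$ preserves the smooth measure $\mu$, so $\mu$ is simultaneously the equilibrium state of $\phi$ and the SRB measure of $L$, i.e.\ the equilibrium state of $-\log|D_uL|$; uniqueness of equilibrium states then gives $\phi\sim-\log|D_uL|+c$, and $c=0$ because both potentials have zero topological pressure.
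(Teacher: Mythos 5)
Your proof is correct and follows essentially the same strategy as the paper: Lemma~\ref{lem:Livsiccond} plus Liv\v{s}ic's theorem. The one structural difference is in the sequencing. The paper first applies a corollary of Liv\v{s}ic on the \emph{shift} (using only the boundedness estimate on $[0]\to[0]$ return cylinders) to conclude that $\psi=\chi\circ\pi$ is a H\"older coboundary on $\Omega$; from this it deduces that $\chi$ sums to zero around every periodic orbit of $L$, and then applies Liv\v{s}ic a second time directly on $\T^2$. You instead go straight from Lemma~\ref{lem:Livsiccond} to vanishing periodic sums on the torus, which requires your unchecked but reasonable remark that the Livšic-type bound holds with $[0]$ replaced by any cylinder $[i]$; the paper's two-step route avoids needing that generalisation because once $\psi$ is a coboundary on the mixing shift, all symbolic periodic sums vanish automatically, including those for orbits that never visit $[0]$, and likewise for periodic points of $L$ lying on $\partial\mathcal P$ (the symbolic lift is periodic and has zero Birkhoff sum regardless of which branch of $\pi^{-1}$ is taken). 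Your final paragraph's alternative via the SRB measure and uniqueness of equilibrium states is a genuinely different and appealingly short route that the paper does not take; it does implicitly use that the new charts push $\mu$ to Lebesgue (so that $\mu$ is the SRB measure in the new atlas) and that $P_{\rm top}(L,\phi)=P_{\rm top}(L,-\log|D_uL|)=0$, both of which are available at this point in the paper.
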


\begin{proof}
We rely on Liv\v{s}ic's theorem \cite{Livsic}: if $T$ is a hyperbolic dynamical system and
$\psi$ is a H\"older continuous function such that $S_n\psi(p)=0$ whenever
$T^np=p$, then $\psi$ is a coboundary (with H\"older continuous transfer function). 

As a corollary, if $\Omega$ is a mixing subshift of finite type and 
there exists an $M$ such that $|S_{n+1}\psi(\omega)|\le M$ whenever $\omega\in [0]$
and $\sigma^n\omega\in[0]$, then $\psi$ is a H\"older coboundary. 

Lemma \ref{lem:Livsiccond} shows that the function $\psi(\omega)=\chi\circ\pi(\omega)$ where
$\chi(x)=\log |D_uL(x)|+\phi(x)$ satisfies the hypothesis
of this corollary of Liv\v{s}ic's theorem, so that $\psi$ is a H\"older coboundary. 
It follows that $\chi$ sums to zero around any periodic orbit in $\T^2$,
so that $\chi$ is also a H\"older coboundary, using Liv\v{s}ic's theorem again. 
\end{proof}

\section{Application to the smooth conjugacy problem.}
In this section we explicitly construct a countable family of H\"older potentials 
in the homotopy class of the toral automorphism $L$ whose geometric potentials 
have identical pressure functions, yet they are not $C^1$ conjugate.
\begin{lem}\label{lem:pressurelift}
    Let $L$ be an automorphism of $\T^2$, let $k\in\N$ and let $M_k(x)=kx\bmod 1$.
    Then for any continuous function $\phi$ on $\T^2$ 
    $$P_{\rm top}(L,\phi)=P_{\rm top}(L,\phi\circ M_k).$$
    \end{lem}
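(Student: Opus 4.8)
The plan is to combine the variational principle for the topological pressure with the fact that $M_k$ is a finite-to-one self-factor of the system $(\T^2,L)$. Since $L$ is given by an integer matrix we have $M_k\circ L=L\circ M_k$ on $\T^2$, and $M_k$ is a continuous surjection with $\#M_k^{-1}(y)=k^2$ for every $y\in\T^2$, i.e.\ a $k^2$-sheeted covering map. Viewing $M_k\colon(\T^2,L)\to(\T^2,L)$ as a factor map, I would prove the two inequalities separately: push $L$-invariant measures forward through $M_k$ for $P_{\rm top}(L,\phi\circ M_k)\le P_{\rm top}(L,\phi)$, and lift $L$-invariant measures through $M_k$ for the reverse, using in both cases that $M_k$ preserves measure-theoretic entropy.

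The heart of the matter, and the main obstacle, is the claim that $h_\mu(L)=h_\nu(L)$ for every $L$-invariant probability measure $\mu$ on $\T^2$, where $\nu=(M_k)_\ast\mu$ (which is $L$-invariant since $M_kL=LM_k$); this is the statement that a finite-to-one factor map is principal. I would either cite this standard fact or argue directly: since $L$ is invertible, $M_k^{-1}(\mathcal B)$ is an $L$-invariant sub-$\sigma$-algebra, so the Abramov--Rokhlin addition formula gives $h_\mu(L)=h_\nu(L)+h_\mu(L\mid M_k^{-1}\mathcal B)$; disintegrating $\mu=\int\mu_y\,d\nu(y)$ over $M_k^{-1}\mathcal B$, each $\mu_y$ lives on the $k^2$-point fibre $M_k^{-1}(y)$, so $H_{\mu_y}(\eta)\le\log k^2$ for every finite partition $\eta$, whence $H_\mu\big(\bigvee_{i=0}^{n-1}L^{-i}\mathcal P\mid M_k^{-1}\mathcal B\big)\le\log k^2$ for every finite partition $\mathcal P$ and every $n$, and dividing by $n$ and letting $n\to\infty$ yields $h_\mu(L\mid M_k^{-1}\mathcal B)=0$.

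Granting this, the rest is routine. For "$\le$": given $L$-invariant $\mu$, put $\nu=(M_k)_\ast\mu$; then $\int\phi\circ M_k\,d\mu=\int\phi\,d\nu$ and $h_\mu(L)=h_\nu(L)$, so $h_\mu(L)+\int\phi\circ M_k\,d\mu=h_\nu(L)+\int\phi\,d\nu\le P_{\rm top}(L,\phi)$, and taking the supremum over $\mu$ via the variational principle gives the bound. For "$\ge$": given $L$-invariant $\nu$, lift it to $\mu$ defined by $\mu(A)=k^{-2}\int\#\big(M_k^{-1}(y)\cap A\big)\,d\nu(y)$; a short computation, using that $L$ permutes fibres ($L(M_k^{-1}(y))=M_k^{-1}(Ly)$) and that $\nu$ is $L$-invariant, shows $\mu$ is $L$-invariant with $(M_k)_\ast\mu=\nu$, hence $\int\phi\circ M_k\,d\mu=\int\phi\,d\nu$, and then $h_\nu(L)+\int\phi\,d\nu=h_\mu(L)+\int\phi\circ M_k\,d\mu\le P_{\rm top}(L,\phi\circ M_k)$; the supremum over $\nu$ finishes the argument. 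One could instead obtain a lift as a weak-$\ast$ limit of the Ces\`aro averages $\tfrac1N\sum_{i<N}L^i_\ast\mu_0$ of any $\mu_0$ with $(M_k)_\ast\mu_0=\nu$, avoiding the explicit formula. Note that the whole argument uses neither hyperbolicity of $L$ nor any regularity of $\phi$ beyond continuity, matching the generality of the statement.
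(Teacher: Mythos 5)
Your proof is correct, but it takes a genuinely different route from the paper. The paper never invokes the variational principle: it works directly with the topological definition of pressure via $(n,\epsilon)$-separated sets, showing first that $M_k^{-1}(E)$ is $(n,\epsilon/k)$-separated whenever $E$ is $(n,\epsilon)$-separated (giving $P_{\rm top}(L,\phi\circ M_k)\ge P_{\rm top}(L,\phi)$), and then, for the reverse inequality, running a combinatorial argument in which points of a separated set for $\phi\circ M_k$ are grouped into equivalence classes of size at most $k^2$ whose $M_k$-images form a separated set for $\phi$. Your argument replaces all of this with the measure-theoretic observation that $M_k$ is a $k^2$-to-one self-factor of $(\T^2,L)$ and that finite-to-one factor maps preserve entropy, which you justify correctly via the Abramov--Rokhlin addition formula and the bound $H_{\mu_y}(\eta)\le 2\log k$ on the atomic fibre measures; the push-forward and the explicit fibre-averaged lift $\mu(A)=k^{-2}\int\#\bigl(M_k^{-1}(y)\cap A\bigr)\,d\nu(y)$ then give the two inequalities. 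Both proofs hinge on the same two facts --- $L\circ M_k=M_k\circ L$ and the $k^2$-fold fibre structure --- but yours is shorter and more conceptual at the price of importing the (standard, but not trivial) relativized entropy formula, while the paper's is longer and more delicate combinatorially but entirely self-contained and elementary.
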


\begin{proof}
We use the topological definition of pressure:
$$
P_{\rm top}(L,\phi)=\lim_{\epsilon\to 0}\limsup_{n\to\infty}
\frac 1n\log\sup\left\{\sum_{x\in E}e^{S_n\phi(x)}: E \text{ is $(n,\epsilon)$-separated}\right\},
$$
where a subset $E$ of $\T^2$
is \emph{$(n,\epsilon)$-separated} (with respect to $L$) if for any distinct elements $x,y\in E$, there exists
$0\le j<n$ such that $d(L^jx,L^jy)\ge \epsilon$. 
 
Denote $\phi_k=\phi\circ M_k$. We first show that $P_{\rm top}(L,\phi_k)\ge P_{\rm top}(L,\phi)$. 
Let $E$ be an $(n,\epsilon)$-separated subset of $\T^2$. 
We define a subset $E'$ of $\T^2$ by $$E'=M_k^{-1}(E)=\{(x+\mathbf n)/k\colon 
x\in E,\mathbf n\in \{0,\ldots,k-1\}^2\}$$
and claim $E'$ is $(n,\frac\epsilon k)$-separated. 
In the case when $x\in E$ and $\mathbf m,\mathbf n$ are distinct elements of $\{0,\ldots,k-1\}^2$,
we claim $d(L^i(\frac{x+\mathbf m}k),L^i(\frac{x+\mathbf n}k))\ge \frac 1k$ for each $i$.
Since $L$ is an automorphism,
it suffices to show that $d(L^i(\frac{\mathbf p}k),0)
\ge \frac1k$ for each 
$\mathbf p\in\{0,\frac 1k,\ldots,\frac{k-1}k\}^2\setminus\{(0,0)\}$ and $i\in\N$. 
Since the matrix $A$ defining $L$ has an inverse with integer entries, it is not hard to see that $L$
is a permutation of the points $\{0,\ldots,\frac{k-1}k\}^2$. Since $L$ is injective, it
follows that $d(L^i(\frac{\mathbf p}k),0)\ge \frac1k$ for each $i$. 
In the case when $x,y$ are distinct elements of $E$
and $\mathbf m,\mathbf n$ are elements of $\{0,\ldots,k-1\}^2$ (not necessarily distinct),
letting $u=\frac{x+\mathbf m}k$ and
$v=\frac{y+\mathbf n}k$, we have 
$$
d(L^iu,L^iv)\ge \tfrac 1kd(M_k(L^iu),M_k(L^iv))=\tfrac 1kd(L^ix,L^iy).
$$ 
Since $\max_{i<n}d(L^ix,L^iy)\ge\epsilon$, it follows that $\max_{i<n}d(L^iu,L^iv)\ge\frac\epsilon k$.
Hence we have established that $E'$ is $(n,\frac\epsilon k)$-separated as required. 

Note $S_n\phi_k(\frac{x+\mathbf m}k)=S_n\phi(x)$ for each $x\in E$ and 
$\mathbf m\in \{0,\ldots,k-1\}^2$. Therefore
\begin{align*}
\sup&\left\{\sum_{x\in E}e^{S_n\phi_k(x)}: E \text{ is $\textstyle(n,\frac{\epsilon}{k})$-separated}\right\}\\
&\qquad\ge k^2\sup\left\{\sum_{x\in E}e^{S_n\phi(x)}: E \text{ is }(n,\epsilon)\text{-separated}\right\},
\end{align*} 
which gives $P_{\rm top}(L,\phi_k)\ge P_{\rm top}(L,\phi)$. 

For the converse inequality, we first claim that any $u,v\in \T^2$ and for any 
positive $\epsilon<1/(2k\|A\|)$, the following implication holds:
\begin{equation}\label{eq:latticeind}
d(u,v)<\epsilon\text{ and }d(M_k(Lu),M_k(Lv))<k\epsilon\text{ implies } d(Lu,Lv)<\epsilon.
\end{equation}
Again, by the linearity of $L$, it suffices to show that if $d(u,0)<\epsilon$ and $d(M_k(Lu),0)<k\epsilon$
then $d(Lu,0)<\epsilon$. To verify this claim, suppose $d(u,0)<\epsilon$. By the choice of $\epsilon$,
$d(Lu,0)<\frac 1{2k}$ so that $0$ is the closest element of $M_k^{-1}\{0\}$ to $Lu$. 
Since $d(M_k(Lu),0)<k\epsilon$,
the fact that $M_k$ locally expands distances by a factor of $k$ implies that 
$d(Lu,0)<\epsilon$ as required. 
    
Let $\epsilon<\frac{1}{2k\|A\|}$ and let $E'$ be an $(n,\epsilon)$ separated set in $\T^2$. 
We define a relation $R$ on $E'$ by
$$
u R v\quad \Leftrightarrow\quad\max_{0\le i<n}d(L^iM_k(u),L^iM_k(v))<\frac\epsilon{2k}.
$$
Equivalently $u R v$ iff $\max_{0\le i<n}d(M_k(L^iu),
M_k(L^iv))<\frac\epsilon{2k}$, since $L\circ M_k=M_k\circ L$.
We then take the transitive closure of $R$ to form an equivalence relation $\sim$ on $E'$. That is,
$u\sim v$ if there exist $u_0,u_1,\ldots,u_l$ with $u_0=u$, $u_l=v$ and $u_{i-1} R u_{i}$ 
for $i=1,\ldots,l$. 
We claim that each $\sim$-equivalence class has at most $k^2$ elements. We prove this by contradiction. 
Suppose $C$ is a $\sim$-equivalence class containing at least $k^2+1$ elements. 
We construct a subset $D$ of cardinality exactly $k^2+1$ such that there is a path between any 
two elements of $D$
using steps in $R$. To see this, fix an initial element of $u_0$ of $C$, 
enumerate the other elements of $C$ and for each such element $u$,
find an $R$-path, from the definition of $\sim$ connecting $u_0$ to $u$. 
We now build $D$ by adding the elements of the paths 
one at a time until the cardinality is exactly $k^2+1$. 
(At each step when a vertex is to be included, $D$ may either increase by one
element if the vertex is new; or remain the same if the vertex has already been added.) 
By the construction, each element of $D$ is connected by $R$ to a previous element of $D$.

Let $D=\{u_0,\ldots,u_{k^2}\}$. By the triangle inequality and the definition of $R$,
$d(M_k(L^i(u_0)),M_k(L^i(u_j)))<\frac{k\epsilon} 2$ for each $j$
(since we can get from $u_0$ to $u_j$ along an $R$ path of length at most $k^2$). 
In particular, $d(M_k(u_0),M_k(u_j))<\frac{k\epsilon} 2$ for each $j$. 
Using the fact that $M_k$ locally expands distances by a factor of $k$, for each $0\le j\le k^2$, $u_j$ 
differs from $u_0$ by an element of $M_k^{-1}\{0\}=\{0,\frac1k,\ldots,\frac{k-1}k\}^2$ 
plus a term of size at most $\frac{\epsilon} 2$. 
By the pigeonhole principle, there exist $0\le j<j'\le k^2$ such that $u_j$ 
and $u_{j'}$ differ by at most $\epsilon$. 
Since $u_j\sim u_{j'}$, we see that $d(L^iM_k(u_j),L^iM_k(u_{j'}))<k\epsilon$ for $i=0,\ldots,n$. 
Applying \eqref{eq:latticeind} inductively we see $d(L^i u_j,L^iu_{j'})<\epsilon$ for $i=0,\ldots,n$. 
This contradicts the initial assumption that $E'$ was $(n,\epsilon)$-separated. 

Hence we have shown that each $\sim$-equivalence class in $E'$ has at most $k^2$ elements. 
Let the equivalence classes be $C_1,\ldots,C_M$; and for each equivalence class, pick
$u_i\in C_i$ for which $S_n\phi_k(u_i)$ is maximal in the equivalence class. 
We now have
$$
\sum_{u\in C_i}\exp (S_n\phi_k(u))\le k^2\exp (S_n\phi_k(u_i)).
$$
Summing over the equivalence classes, we obtain
$$
\sum_{u\in E'}\exp(S_n\phi_k(u))\le k^2\sum_{i=1}^M\exp(S_n\phi_k(u_i)).
$$
Let $x_i=M_k(u_i)$ for each $i$. Since $S_n\phi_k(u_i)=S_n\phi(x_i)$, 
rearranging the above inequality gives
$$
\sum_{i=1}^M\exp(S_n\phi(x_i))\ge \frac1{k^2}\sum_{u\in E'}\exp(S_n\phi_k(u)).
$$
Finally, we claim that $\{x_1,\ldots,x_M\}$ is $(n,\frac\epsilon{2k})$-separated. 
If not then, there there exist $j,l$ such that $d(L^ix_j,L^ix_l)<\frac\epsilon{2k}$ for $i=0,\ldots,n-1$. 
Then, since $x_j=M_k(u_j)$ and $x_l=M_k(u_l)$, we see from the definition of $R$ that  $u_j R u_l$. This
contradicts the assumption that the $u_i$'s belong to distinct equivalence classes. 

Hence we have shown

\begin{align*}
\sup&\left\{\sum_{x\in E}e^{S_n\phi(x)}: E \text{ is $\textstyle(n,\frac{\epsilon}{2k})$-separated}\right\}\\
&\qquad\ge \frac{1}{k^2}\sup\left\{\sum_{u\in E'}e^{S_n\phi_k(u)}: 
E \text{ is }(n,\epsilon)\text{-separated}\right\},
\end{align*} 
    It follows that $P_{\rm top}(L,\phi)\ge P_{\rm top}(L,\phi_k)$ as required.    
\end{proof}

\begin{proof}[Proof of Theorem \ref{thm:Federico}]
Let $L$ be the Anosov automorphism of the torus given by the matrix $\begin{pmatrix}1&1\\1&0\end{pmatrix}$. 
Note that $(\frac 12,0),(\frac 12,\frac 12),(0,\frac 12)$ is the unique period 
3 orbit of $L$. Let $\phi$ be a H\"older
continuous function of the torus 
of pressure 0 such that $\phi(0,0)\ne \frac 13(\phi(\frac 12,0)+\phi(\frac 12,\frac 12)+\phi(0,\frac 12))$
and let $\phi_2(x)=\phi(2x)$ as above. Then 
$$\textstyle\frac 13(\phi_2(\frac 12,0)+\phi_2(\frac 12,\frac 12)+\phi_2(0,\frac 12))
=\phi(0,0)\ne \frac 13(\phi(\frac 12,0)+\phi(\frac 12,\frac 12)+\phi(0,\frac 12)).$$ 
We conclude the proof by showing that if $T$ and $T_2$ are the area-preserving 
Anosov diffeomorphisms obtained from $\phi$ and $\phi_2$ respectively
as in Theorem \ref{thm:main},
then $T$ and $T_2$ are not conjugate, but they satisfy $P_{\rm top}(T,-sD_uT)=
P_{\rm top}(T_2,-sD_u T_2)$ for all $s\in\R$. 

Let $h$ be the conjugacy between $T$ and $L$ obtained in the proof of Theorem \ref{thm:main}.  
Similarly, let $h_2$ be the conjugacy between $T_2$ and $L$. 
The theorem guarantees that $-\log |D_u T|\circ h$ is cohomologous to $\phi$
and $-\log|D_u T_2|\circ h_2$ is cohomologous to $\phi_2$. Let $p=h(\frac 12,0)$ and notice that
$\{p,Tp,T^2p\}$ is the unique period 3 orbit of $T$. Similarly let $p_2=h_2(\frac 12,0)$ so that
$\{p_2,T_2p_2,T_2^2p_2\}$ is the unique period 3 orbit of $T_2$. Since $-\log |D_uT|\circ h$
is cohomologous to $\phi$, we see that $$|D_uT^3(p)|=|D_uT^3(Tp)|=|D_uT^3(T^2p)|=
e^{\phi(\frac 12,0)+\phi(\frac 12,\frac 12)+\phi(0,\frac 12)},$$
while $$|D_uT_2^3(p_2)|=e^{\phi_2(\frac 12,0)+\phi_2(\frac 12,\frac 12)+\phi_2(0,\frac 12)}=e^{3\phi(0,0)}.$$ 
Since differentiable conjugacies preserve unstable multipliers, we see that $T$ and $T_2$ are not
differentiably conjugate. 

However, 
\begin{align*}
    P_{\rm top}(T,-s\log|D_u T|)&=P_{\rm top}(hLh^{-1},-s\log|D_u T|)\\
    &=P_{\rm top}(L,-s\log|D_u T|\circ h)\\
    &=P_{\rm top}(L,-s\phi)
\end{align*}
and similarly $P_{\rm top}(T_2,-s\log|D_u T_2|)=P_{\rm top}(L,-s\phi_2)$. By Lemma \ref{lem:pressurelift},
$P_{\rm top}(L,-s\phi)=P_{\rm top}(L,-s\phi\circ M_2)=P_{\rm top}(L,-s\phi_2)$ for all $s\in\R$ so that 
$P_{\rm top}(T,-s\log|D_u T|)=P_{\rm top}(T_2,-s\log|D_u T_2|)$ for all $s$.
\end{proof}


\begin{thebibliography}{99}

\bibitem{Anosov} D. V. Anosov, \emph{Geodesic flows on closed Riemannian manifolds}, Proc. Stek. Inst. {\bf 90} (1967)

\bibitem{Bowen} R. Bowen, \emph{Equilibrium states and the ergodic theory of Anosov diffeomorphisms}, Lecture
Notes in Mathematics, Vol. 470. corr. 2nd rev. ed. (1st ed. 1975) Springer (2017).

\bibitem{Ca} D. Capocaccia, \emph{A definition of Gibbs states for a compact set with $\Z^\nu$-action}, Comm. Math. Phys. {\bf 48}, (1976), 85-88.
    
\bibitem{C} E. Cawley, \emph{The Teichmüller space of an Anosov diffeomorphism of $\mathbb{T}^2$}, Invent. Math. 112 (1993), no. 2, 351–376.
    
\bibitem{DeWGigolev} J. DeWitt and A. Gogolev, \emph{Dominated splitting from constant periodic data and global rigidity of Anosov automorphisms}, preprint arXiv:2309.07323

\bibitem{Franks} J. Franks, \emph{Anosov diffeomorphisms on tori}, Trans. AMS {\bf 145}, 117-125 (1969)

\bibitem{Go} A. Gogolev, \emph{Smooth conjugacy of Anosov diffeomorphisms on higher dimensional tori}, Journal of Modern Dynamics, 2, no. 4, 645-700 (2008)

\bibitem{GoRH} A. Gogolev and F. Rodriguez Hertz, \emph{Smooth rigidity for very non-algebraic
Anosov diffeomorphisms of codimension one}, Israel Journal of Mathematics, to appear.

\bibitem{Haydn} N. Haydn, \emph{On Gibbs and equilibrium states}, Ergodic Theory Dyn. Syst. {\bf 7}, 119-132 (1987).

\bibitem{HR} N. Haydn and D. Ruelle, \emph{Equivalence of Gibbs and Equilibrium States for Homeomorphisms Satisfying Expansiveness
and Specification}, Commun. Math. Phys. {\bf 148}, 155-167 (1992).

\bibitem{E2} A. Erchenko, \emph{Flexibility of Lyapunov exponents with respect to two classes of measures on the torus}, Ergodic Theory and Dynamical Systems, {\bf 42}, Issue 2, 737 – 776 (2022).
    
\bibitem{KaSa} B. Kalinin, V. Sadovskaya, \emph{On Anosov diffeomorphisms with asymptotically conformal periodic data}, Ergodic Theory and Dynamical Systems, {\bf 29} Issue 1, 117 - 136 (2009).

\bibitem{Livsic} A. N. Liv\v{s}ic, \emph{Cohomology of dynamical systems}, Izv. Akad. Nauk SSSR Ser. Mat.
{\bf 36}, 1296--1320 (1972).
    
\bibitem{LMM-0} R. de la Llave, J. M. Marco, and R. Moriy\'on, \emph{Canonical perturbation theory of Anosov systems and regularity results for the Livšic cohomology equation}, Ann. of Math. {\bf(2) 123} no.3, 537–611 (1986).



\bibitem{LMM-II} R. de la Llave, \emph{Invariants for smooth conjugacy of hyperbolic dynamical
systems II}, Commun. Math. Phys. {\bf 109}, 369-378 (1987)



\bibitem{LMM-IV} R. de la Llave and R. Moriy\'on, \emph{Invariants for smooth conjugacy of hyperbolic dynamical
systems IV}, Commun. Math. Phys. {\bf 116}, 185-192 (1988)

\bibitem{LMM-V} R. de la Llave, \emph{Smooth Conjugacy and S-R-B Measures
for Uniformly and Non-Uniformly
Hyperbolic Systems}, Commun. Math. Phys. {\bf 150}, 289-320 (1992)

\bibitem{Manning} A. Manning, \emph{There are no new Anosov diffeomorphisms on tori}, Amer. J. Math. {\bf 96}, 422-429 (1974).
    
\bibitem{LMM-I} J. M. Marco and R. Moriy\'on, \emph{Invariants for smooth conjugacy of hyperbolic dynamical
systems I}, Commun. Math. Phys. {\bf 109}, 681-689 (1987)

\bibitem{LMM-III} J. M. Marco and R. Moriy\'on, \emph{Invariants for smooth conjugacy of hyperbolic dynamical
systems IV}, Commun. Math. Phys. {\bf 112}, 317-333 (1987)

\bibitem{Mane} R. Ma\~n\'e, \emph{Ergodic Theory and Differentiable Dynamics}, Springer (1987)

\bibitem{Newhouse} S. Newhouse, \emph{On codimension one Anosov diffeomorphisms}, Amer. J. Math. {\bf 92}, 761-770 (1970).
    
\bibitem{PoW} M. Pollicott and H. Weiss, \emph{Free Energy as a Dynamical Invariant
(or Can You Hear the Shape of a Potential?)}, Commun. Math. Phys. {\bf 240}, 457–482 (2003)

\bibitem{Ruelle} D. Ruelle, \emph{Thermodynamic formalism. The mathematical structures of classical
equilibrium statistical mechanics}, Addison-Wesley, 1978.

\bibitem{Sm} S. Smale, \emph{Differentiate dynamical systems}, Bull A.M.S. {\bf 73}, 747-817 (1967).

\bibitem{Walters} P. Walters, \emph{Ruelle's operator theorem and g-measures}, Trans. Amer. Math. Soc. {\bf 214} (1975), 375-387.


\end{thebibliography}
\end{document}